\newcommand{\emp}{\ensuremath{\varnothing}}
\newcommand{\scal}[2]{\langle{{#1},{#2}}\rangle}
\newcommand{\exi}{\ensuremath{\exists\,}}
\newcommand{\RR}{\ensuremath{\mathbb R}}
\newcommand{\RX}{\ensuremath{\,\left]-\infty,+\infty\right]}}
\newcommand{\menge}[2]{\big\{{#1} \mid {#2}\big\}}
\newcommand{\cone}{\ensuremath{\operatorname{cone}}}
\newcommand{\dom}{\ensuremath{\operatorname{dom}}}
\newcommand{\gra}{\ensuremath{\operatorname{gra}}}
\newcommand{\inte}{\ensuremath{\operatorname{int}}}
\newcommand{\ran}{\ensuremath{\operatorname{ran}}}
\newcommand{\kernel}{\ensuremath{\operatorname{ker}}}
\newcommand{\halo}{\ensuremath{\operatorname{halo}}}
\newcommand{\pinf}{\ensuremath{+\infty}}
\renewcommand{\phi}{\ensuremath{\varphi}}
\newcommand{\To}{\ensuremath{\rightrightarrows}}
\newtheorem{theorem}{Theorem}[section]
\newtheorem{lemma}[theorem]{Lemma}
\newtheorem{fact}[theorem]{Fact}
\newtheorem{corollary}[theorem]{Corollary}
\newtheorem{proposition}[theorem]{Proposition}
\newtheorem{definition}[theorem]{Definition}
\theoremstyle{plain}{\theorembodyfont{\rmfamily}
}
\theoremstyle{plain}{\theorembodyfont{\rmfamily}
}
\theoremstyle{plain}{\theorembodyfont{\rmfamily}
}
\theoremstyle{plain}{\theorembodyfont{\rmfamily}
\newtheorem{example}[theorem]{Example}}
\theoremstyle{plain}{\theorembodyfont{\rmfamily}
\newtheorem{remark}[theorem]{Remark}}
\theoremstyle{plain}{\theorembodyfont{\rmfamily}
}
\begin{document}

\title{{\sffamily Monotone Linear Relations:\\
Maximality and Fitzpatrick Functions}}

\author{
Heinz H.\ Bauschke\thanks{Mathematics, Irving K.\ Barber School,
UBC Okanagan, Kelowna, British Columbia V1V 1V7, Canada. E-mail:
\texttt{heinz.bauschke@ubc.ca}.},~  Xianfu
Wang\thanks{Mathematics, Irving K.\ Barber School, UBC Okanagan,
Kelowna, British Columbia V1V 1V7, Canada. E-mail:
\texttt{shawn.wang@ubc.ca}.},~ and ~ Liangjin\
Yao\thanks{Mathematics, Irving K.\ Barber School, UBC Okanagan,
Kelowna, British Columbia V1V 1V7, Canada.
E-mail:  \texttt{ljinyao@interchange.ubc.ca}.} }

\date{
May 27, 2008 
\vskip 1 cm
\emph{Dedicated to Stephen Simons on the occasion of his $70^\text{th}$
birthday}
}

\maketitle

\begin{abstract} \noindent
We analyze and characterize maximal monotonicity of  linear relations
(set-valued operators with linear graphs). An important tool in our
study are
Fitzpatrick functions. The results obtained partially extend work on
linear and at most single-valued operators by
Phelps and Simons and by
Bauschke, Borwein and Wang.
Furthermore, a description of skew linear relations
in terms of the Fitzpatrick family is obtained.
We also answer one of Simons' problems by showing that if 
a maximal monotone operator has a convex graph, then this graph must actually be affine.
\end{abstract}

\noindent {\bfseries 2000 Mathematics Subject Classification:}\\
Primary 47A06, 47H05; Secondary 26B25, 47A05, 49N15, 52A41, 90C25.

\noindent {\bfseries Keywords:} \\
Adjoint process,
Fenchel conjugate,
Fitzpatrick family,
Fitzpatrick function,
linear relation,
maximal monotone operator,
monotone operator,
skew linear relation.

\section{Introduction}
Linear relations (also known as linear processes)
have been considered by many authors for a long time;
see \cite{aubin,Cross,borwein1,arens} and the many references therein.
Surprisingly, the class of monotone (in the sense of set-valued
analysis)
linear relations has been explored much less even
though it provides a considerably
broader framework for studying monotone linear operators and
its members arise frequently in optimization,
functional analysis and functional equations.

This paper focuses on
monotone linear relations, i.e., on monotone operators with linear graphs.
We discuss the relationships of domain, range, and kernel between original
and adjoint linear relation as well as Fitzpatrick functions
and criteria for maximal monotonicity.
Throughout, $X$ denotes a reflexive real Banach
space, with continuous dual space $X^*$, and with pairing
$\scal{\cdot}{\cdot}$.
The Notation used is standard and as in Convex Analysis and Monotone
Operator Theory; see, e.g., \cite{ph,Rocky,RockWets,Si,Zalinescu}.
Let $A$ be a set-valued operator (also known as multifunction)
from $X$ to $X^*$.
Then the
\emph{graph} of $A$ is
$\gra A := \menge{(x,x^*)\in X\times X^*}{x^*\in Ax}$, and $A$  is
\emph{monotone} if
\begin{equation*}
\big(\forall (x,x^*)\in \gra A\big)\big(\forall (y,y^*)\in\gra
A\big) \quad \scal{x-y}{x^*-y^*}\geq 0.
\end{equation*}
$A$ is said to be \emph{maximal monotone} if no proper enlargement
(in the sense of graph inclusion) of $A$ is monotone.
The \emph{inverse
operator} $A^{-1}\colon X^*\To X$ is given by $\gra A^{-1} :=
\menge{(x^*,x)\in X^*\times X}{x^*\in Ax}$; the \emph{domain} of $A$ is
$\dom A := \menge{x\in X}{Ax\neq\varnothing}$; its
\emph{kernel} is $\ker A:= \menge{x\in
X}{0\in Ax}$, and its \emph{range} is $\ran A: = A(X)$. We say
$(x,x^*)\in X\times X^*$ is \emph{monotonically related} to $\gra A$ if
$(\forall (y,y^*)\in\gra A)$
$\scal{x-y}{x^*-y^*}\geq 0$.
The \emph{adjoint} of $A$, written $A^*$, is defined
by
\begin{equation*}
\gra A^* :=
\menge{(x,x^*)\in X\times X^*}{(x^*,-x)\in (\gra A)^\bot},
\end{equation*}
where $(\gra A)^{\bot}=\menge{(y^*,y)\in X^*\times X}{(\forall (a,a^*)\in\gra A)\;\; 
\langle y^*,a\rangle+\langle a^*,y\rangle =0}.$
We say $A$
is a maximal monotone linear relation if $A$ is a maximal monotone operator
and $\gra A$ is a linear subspace of $X\times X^*$.
Finally,
if $f\colon X\rightarrow\RX$ is proper and convex, we write
$f^* \colon x^*\mapsto \sup_{x\in X} \scal{x}{x^*}-f(x)$,
and
$\dom f := \menge{x\in X}{f(x)<\pinf}$,
for the \emph{Fenchel conjugate},
and the \emph{domain} of $f$, respectively.

The outline of the paper is as follows. In Section~\ref{s:2} we provide
preliminary results about monotone linear relations. In
Section~\ref{s:3}, the relationships between domains, ranges, and kernels of $A$
and $A^*$ are discussed. In Section~\ref{s:4}, we present a
result that states that a maximal monotone operators
with convex graphs must be affine.   Section~\ref{s:5} provides useful
relationships among the Fitzpatrick functions $F_{A+B}$, $F_{A}$ and
$F_{B}$.  In Section~\ref{s:6},
the maximality criteria for monotone linear relations
are established; these generalize corresponding results by
by Phelps and Simons \cite{PheSim} on linear (at most single-valued)
monotone operators.  The final Section~\ref{s:7}
contains a characterization of
skew linear operators in terms of the single-valuedness of
Fitzpatrick family associated to the monotone operator.
The results in Sections \ref{s:3}, \ref{s:4} and \ref{s:7}
extend their single-valued counterparts in \cite{BBW}
to monotone linear relations.

\section{Auxiliary Results for Monotone Linear Relations}
\label{s:2}

\begin{fact}\label{Rea:1}
Let $A \colon X \To X^*$ be a linear relation.
Then the following hold.
\begin{enumerate}
\item \label{Th:26} $A0$ is a linear subspace of $X^*$.
\item \label{Th:28} $(\forall (x,x^*)\in\gra A)$
$Ax=x^* +A0$.
\item \label{Th:30}
$(\forall x\in\dom A)(\forall y\in\dom A)(\forall (\alpha,\beta)
\in\RR^2\smallsetminus\{0,0\})$
$A(\alpha x+\beta y) = \alpha Ax+\beta Ay$.
\item \label{Sia:2b}
$(\forall x\in \dom A^*)(\forall y\in\dom A)$
$\scal{A^*x}{y}=\scal{x}{Ay}$ is a singleton.
\item \label{Sia:1}$ \overline{\dom A}=(A^*0)^{\perp}$.
If $\gra A$ is closed, then $(\ker A)^{\bot}=\overline{\ran A^*}$,
$\overline{\dom A^*}= (A0)^{\bot}$,
 and  $A^{**} = A$.
\end{enumerate}
\end{fact}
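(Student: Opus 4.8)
The plan is to reduce every assertion to the bipolar theorem in the reflexive space $X$ (equivalently, in $X\times X^*$), since the one genuinely nontrivial ingredient is that a closed subspace equals its double annihilator. First I would record the dictionary translating the adjoint into an annihilator: unravelling the definitions, $x^*\in A^*0$ means $(x^*,0)\in(\gra A)^\perp$, which says exactly that $\scal{x^*}{a}=0$ for every $a\in\dom A$; hence $A^*0=(\dom A)^\perp$. Since $\dom A$ is a linear subspace (by Fact~\ref{Rea:1}\ref{Th:30}, together with $0\in\dom A$ from Fact~\ref{Rea:1}\ref{Th:26}), the bipolar theorem gives $(A^*0)^\perp=\big((\dom A)^\perp\big)^\perp=\overline{\dom A}$, which is the first, unconditional, identity. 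Note that this step uses reflexivity but not closedness of $\gra A$.

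For the three remaining identities I would assume $\gra A$ closed and exploit the description $(\gra A)^\perp=\menge{(u^*,-u)}{(u,u^*)\in\gra A^*}$, read off directly from the definition of the adjoint (the map $(u,u^*)\mapsto(u^*,-u)$ carries $\gra A^*$ bijectively onto $(\gra A)^\perp$). Using this, for the kernel identity I would first prove $\ker A=(\ran A^*)^\perp$. The inclusion ``$\subseteq$'' is immediate: if $(x,0)\in\gra A$ and $x^*\in\ran A^*$, pick $(y,x^*)\in\gra A^*$, so $(x^*,-y)\in(\gra A)^\perp$, and pairing against $(x,0)$ yields $\scal{x^*}{x}=0$. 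The reverse inclusion is where closedness enters: if $x\in(\ran A^*)^\perp$, then every $(y^*,y)\in(\gra A)^\perp$ has $y^*\in\ran A^*$ by the dictionary, so $(x,0)$ annihilates all of $(\gra A)^\perp$, whence $(x,0)\in(\gra A)^{\perp\perp}=\gra A$. Taking annihilators and invoking the bipolar theorem once more turns $\ker A=(\ran A^*)^\perp$ into $(\ker A)^\perp=\overline{\ran A^*}$.

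For $A^{**}=A$ I would argue at the level of graphs. Expanding the definition twice, $(x,x^*)\in\gra A^{**}$ is equivalent to $\scal{x^*}{a}=\scal{a^*}{x}$ for all $(a,a^*)\in\gra A^*$; substituting the dictionary description of $(\gra A)^\perp$ shows this is precisely the condition that $(x,x^*)$ annihilate $(\gra A)^\perp$, i.e.\ $(x,x^*)\in(\gra A)^{\perp\perp}$. Closedness then gives $(\gra A)^{\perp\perp}=\gra A$ and hence $A^{**}=A$. Finally, the identity $\overline{\dom A^*}=(A0)^\perp$ costs nothing extra: $A^*$ is again a linear relation, so applying the already-proved first identity to $A^*$ gives $\overline{\dom A^*}=(A^{**}0)^\perp$, and replacing $A^{**}$ by $A$ finishes the proof.

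The main obstacle---really the only place any analysis, as opposed to bookkeeping, occurs---is the reverse inclusions, all of which rest on $(\gra A)^{\perp\perp}=\gra A$; this is exactly where both hypotheses, reflexivity of $X$ and closedness of $\gra A$, are indispensable. The remaining risk is purely clerical: one must keep straight which space each annihilator lives in (the perps alternate between subsets of $X$, $X^*$, and $X\times X^*$), and track the sign and swap built into the definition of the adjoint, so that the dictionary $(\gra A)^\perp=\menge{(u^*,-u)}{(u,u^*)\in\gra A^*}$ is applied with the correct orientation.
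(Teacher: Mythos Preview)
Your argument for part~\ref{Sia:1} is correct and self-contained, resting cleanly on the bipolar theorem and the dictionary $(\gra A)^\perp=\menge{(u^*,-u)}{(u,u^*)\in\gra A^*}$; the paper, by contrast, proves nothing directly but simply cites Cross's monograph \emph{Multivalued Linear Operators} for every item. So for the substantive part~\ref{Sia:1} you have supplied what the paper does not: an actual proof, and one that makes transparent exactly where reflexivity and closedness of $\gra A$ enter (namely, only in the identity $(\gra A)^{\perp\perp}=\gra A$). This is a genuine improvement in exposition over a bare citation.

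Two minor issues remain. First, you address only item~\ref{Sia:1}; items \ref{Th:26}--\ref{Sia:2b} are purely algebraic (no topology, no bipolars) and follow in a line or two from the fact that $\gra A$ is a linear subspace, but you should still say so rather than leave them implicit. Second, in establishing that $\dom A$ is a linear subspace you cite Fact~\ref{Rea:1}\ref{Th:26} and~\ref{Th:30}---the very statement you are proving---which is circular. The fix is trivial: $\dom A=P_X(\gra A)$ is the image of a linear subspace under a linear map, hence linear. Replace the self-reference with that one sentence and the argument stands on its own.
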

\begin{proof}
\ref{Th:26}: See \cite[Corollary~I.2.4]{Cross}.
\ref{Th:28}: See \cite[Proposition~I.2.8(a)]{Cross}.
\ref{Th:30}: See \cite[Corollary~I.2.5]{Cross}.
\ref{Sia:2b}: See \cite[Proposition~III.1.2]{Cross}.
\ref{Sia:1}: See \cite[Proposition~III.1.4(b)--(d), Theorem~III.4.7, and Exercise~VIII.1.12]{Cross}.
\end{proof}

\begin{proposition}\label{P:a}
Let $A\colon X \To X^*$ be a monotone linear relation.
Then the following hold.
\begin{enumerate}
\item\label{Th:27}
$\dom A\subset (A0)^\bot$ and $A0\subset (\dom A)^\bot$;
consequently, if $\gra A$ is closed, then
$\dom A\subset\overline{\dom A^*}$ and $A0\subset A^*0$.
\item \label{Th:32}
$(\forall x\in\dom A)(\forall z\in (A0)^\bot)$
$\langle z,Ax\rangle$ is single-valued.
\item \label{Th:32+}
$(\forall z \in (A0)^\bot)$ $\dom A \to \RR \colon y\mapsto \scal{z}{Ay}$
is linear.
\item \label{Th:31} $A$ is monotone $\Leftrightarrow$
$(\forall x\in\dom A)$ $\langle x,Ax\rangle$ is single-valued and
$\langle x,Ax\rangle\geq 0$.
\item \label{Th:31+} If $(x,x^*)\in (\dom A)\times X^*$ is monotonically
related to $\gra A$ and $x_{0}^*\in Ax$, then
$x^*-x_{0}^*\in (\dom A)^{\perp}.$
\end{enumerate}
\end{proposition}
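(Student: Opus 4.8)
The plan is to establish the five items in the stated order, since each draws on the earlier ones, and to isolate the single genuinely \emph{monotone} ingredient---a sign condition on a quadratic in a parameter $\lambda$---which is used only in \ref{Th:27} and \ref{Th:31+}. For \ref{Th:27} I would fix $x\in\dom A$, pick $x^*\in Ax$ and $w^*\in A0$. Because $A0$ is a linear subspace (Fact~\ref{Rea:1}\ref{Th:26}), $(0,\lambda w^*)\in\gra A$ for every $\lambda\in\RR$, so monotonicity applied to the pair $(x,x^*)$ and $(0,\lambda w^*)$ gives $\scal{x}{x^*}-\lambda\scal{x}{w^*}\geq 0$ for all $\lambda\in\RR$, which forces $\scal{x}{w^*}=0$. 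Hence $\dom A\subset(A0)^\bot$, and since the condition ``$\scal{x}{w^*}=0$ for all $x\in\dom A$ and $w^*\in A0$'' is symmetric in its two arguments, the inclusion $A0\subset(\dom A)^\bot$ is literally the same statement. For the two consequences, assume $\gra A$ closed: Fact~\ref{Rea:1}\ref{Sia:1} gives $\overline{\dom A^*}=(A0)^\bot\supset\dom A$, while unravelling the definition of $\gra A^*$ shows directly that $A^*0=(\dom A)^\bot$, so $A0\subset(\dom A)^\bot=A^*0$.

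Items \ref{Th:32} and \ref{Th:32+} are then formal consequences of the linear structure. By Fact~\ref{Rea:1}\ref{Th:28}, if $x\in\dom A$ and $x^*\in Ax$ then $Ax=x^*+A0$, so for any $z\in(A0)^\bot$ we get $\scal{z}{Ax}=\scal{z}{x^*}+\scal{z}{A0}=\{\scal{z}{x^*}\}$, a singleton; this is \ref{Th:32}. Writing $L(y):=\scal{z}{Ay}$ for the resulting single number, \ref{Th:32+} follows by pairing the identity $A(\alpha x+\beta y)=\alpha Ax+\beta Ay$ of Fact~\ref{Rea:1}\ref{Th:30} with $z$: for $(\alpha,\beta)\neq(0,0)$ this yields $L(\alpha x+\beta y)=\alpha L(x)+\beta L(y)$, and the excluded case is covered by $L(0)=\scal{z}{A0}=0$, so that $L$ is both additive and homogeneous.

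For the equivalence \ref{Th:31}, in the forward direction I would combine \ref{Th:27} (so that $x\in\dom A\subset(A0)^\bot$) with \ref{Th:32} applied to $z=x$ to see that $\scal{x}{Ax}$ is single-valued, and obtain $\scal{x}{Ax}\geq 0$ from monotonicity of the pair $(x,x^*)$, $(0,0)$, noting $(0,0)\in\gra A$. Conversely---using only that $\gra A$ is linear---given $(x,x^*),(y,y^*)\in\gra A$, one has $x-y\in\dom A$ and, by Fact~\ref{Rea:1}\ref{Th:30}, $x^*-y^*\in A(x-y)$; hence the number $\scal{x-y}{x^*-y^*}$ lies in the singleton nonnegative set $\scal{x-y}{A(x-y)}$ and is therefore $\geq 0$, which is monotonicity.

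Finally, for \ref{Th:31+}, fix $(y,y^*)\in\gra A$; since $(x,x_0^*)\in\gra A$ and $\gra A$ is a subspace, $(x+\lambda y,\,x_0^*+\lambda y^*)\in\gra A$ for all $\lambda\in\RR$. Monotone relatedness of $(x,x^*)$ to this point gives $-\lambda\scal{y}{x^*-x_0^*}+\lambda^2\scal{y}{y^*}\geq 0$ for every $\lambda\in\RR$. This nonnegative quadratic in $\lambda$ vanishes at $\lambda=0$, so its linear coefficient must be zero, i.e.\ $\scal{y}{x^*-x_0^*}=0$; as $y\in\dom A$ is arbitrary, $x^*-x_0^*\in(\dom A)^\bot$. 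All of the arguments are short, and the main (mild) obstacle is the recurring device in \ref{Th:27} and \ref{Th:31+}: recognizing that linearity of the graph manufactures a one-parameter family of graph points on which monotonicity collapses to a sign condition on a polynomial in $\lambda$, which is exactly what forces the orthogonality relations. The remaining items are essentially bookkeeping with Fact~\ref{Rea:1}.
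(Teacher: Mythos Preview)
Your proof is correct and follows essentially the same approach as the paper's: items \ref{Th:27}--\ref{Th:31} match almost verbatim, and in \ref{Th:31+} your two-sided quadratic-in-$\lambda$ argument is a minor variant of the paper's one-sided scaling-and-limit (replace $(v,v^*)$ by $(\lambda v,\lambda v^*)$, divide by $\lambda>0$, let $\lambda\to 0^+$). As a small bonus, your direct observation in \ref{Th:27} that $A^*0=(\dom A)^\bot$ holds for any linear relation, so the inclusion $A0\subset A^*0$ does not actually require $\gra A$ to be closed.
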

\begin{proof}
\ref{Th:27}: Pick $x\in \dom A$.
Then there exists $x^*\in X$ such that
  $(x,x^*)\in \gra A.$  By
  monotonicity of $A$ and since $(0,0)\in \gra A$,
 we have  $\langle x, x^*\rangle\geq \langle x,A0\rangle$.
Since $A0$ is a linear subspace (Fact~\ref{Rea:1}\ref{Th:26}),
we obtain
$x\bot A0$. This implies
$\dom A\subset (A0)^\bot$ and $A0\subset (\dom A)^\bot$.
As $\gra A$ is closed, Fact~\ref{Rea:1}\ref{Sia:1}
yields $\dom A\subset\overline{\dom A^*}$ and  $A0\subset A^*0$.

\ref{Th:32}:
Take $x\in\dom A$, $x^*\in Ax$, and $z\in (A0)^\bot$.
By Fact~\ref{Rea:1}\ref{Th:28},
$\langle z,Ax\rangle=\langle z,x^*+A0\rangle=\langle z,x^*\rangle$.

\ref{Th:32+}:
Take $z\in (A0)^{\perp}$. By \ref{Th:32},
$(\forall y\in\dom A)$ $\langle z, Ay\rangle$
is single-valued.
Now let $x,y$ be in $\dom A$, and let $\alpha,\beta$ be in $\RR$.
If $(\alpha,\beta) = (0,0)$, then
$\langle z, A(\alpha x+\beta y)\rangle=\langle z, A0\rangle=0
=\alpha \langle z, Ax\rangle +\beta \langle z, Ay\rangle$.
And if $(\alpha,\beta)\neq (0,0)$, then
Fact~\ref{Rea:1}\ref{Th:30} yields
$\langle z, A(\alpha x+\beta y)=\langle z, \alpha Ax +\beta Ay\rangle
=\alpha \langle z, Ax\rangle +\beta\langle z, Ay\rangle$.
This verifies the linearity.

\ref{Th:31}: ``$\Rightarrow$'': This  follows from
\ref{Th:27}, \ref{Th:32}, and the fact that $(0,0)\in\gra A$.
``$\Leftarrow$'': If $x$ and $y$ belong to $\dom A$, then
Fact~\ref{Rea:1}\ref{Th:30} yields
$\langle x-y,Ax-Ay\rangle=\langle x-y,A(x-y)\rangle\geq 0$.

\ref{Th:31+}: Let $(x,x^*)\in(\dom A)\times X^*$
be monotonically related to $\gra A$,
and take $x_0^*\in Ax.$
For every $(v,v^*)\in\gra A$, we have that $x_0^*+
v^*\in A(x+v)$ (by Fact~\ref{Rea:1}\ref{Th:30}); hence,
$\langle x-(x+v), x^*-(x_0^*+ v^*)\rangle\geq 0$ and thus
$\langle v,  v^*\rangle\geq \langle v, x^*-x_0^*
\rangle$. Now take
$\lambda>0$  and replace $(v,v^*)$ in the last inequality
by $(\lambda v,\lambda v^*)$.
Then divide by $\lambda$ and let $\lambda\rightarrow 0^+$ to see that
$0\geq \langle \dom A,\ x^*-x_0^*\rangle$.
Since $\dom A$ is linear, it follows that  $x^*-x_0^*\in
 (\dom A)^\bot$.
\end{proof}

For $A\colon X \To X^*$ it will be convenient
to define (as in, e.g., \cite{BBW})
\begin{equation*}
(\forall x\in X)\quad
q_A(x) :=  \begin{cases} \tfrac{1}{2}\langle x,Ax\rangle,&\text{if $x\in \dom A$};\\
\infty,&\text{otherwise}.\end{cases}\end{equation*}

\begin{proposition}\label{rP:1} Let $A\colon X \To X^*$ be a linear
relation,
let $x$ and $y$ be in $\dom A$, and let $\lambda\in\RR$.
Then
\begin{equation} \label{deli:1}
\lambda q_A(x) + (1-\lambda)q_A(y) - q_{A}(\lambda x + (1-\lambda)y) =
\lambda(1-\lambda)q_A(x-y) =
\tfrac{1}{2}\lambda(1-\lambda)\scal{x-y}{Ax-Ay}.
\end{equation}
Moreover,
$A$ is monotone $\Leftrightarrow$
$q_A$ is single-valued and convex.
\end{proposition}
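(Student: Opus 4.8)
The plan is to reduce \eqref{deli:1} to an elementary bilinear identity evaluated along compatible selections, and then to derive the ``moreover'' clause by feeding \eqref{deli:1} into the monotonicity test of Proposition~\ref{P:a}\ref{Th:31}.

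First I would record the purely algebraic identity that for all $u,v\in X$, $u^*,v^*\in X^*$ and $\lambda\in\RR$,
\begin{equation*}
\lambda\scal{u}{u^*}+(1-\lambda)\scal{v}{v^*}-\scal{\lambda u+(1-\lambda)v}{\lambda u^*+(1-\lambda)v^*}=\lambda(1-\lambda)\scal{u-v}{u^*-v^*},
\end{equation*}
which follows by expanding both sides and matching the four bilinear terms. To pass from this to \eqref{deli:1} I would fix selections $x^*\in Ax$ and $y^*\in Ay$ and set $u=x$, $v=y$, $u^*=x^*$, $v^*=y^*$. Since $(\lambda,1-\lambda)\neq(0,0)$ and $(1,-1)\neq(0,0)$ for every $\lambda$, Fact~\ref{Rea:1}\ref{Th:30} (together with the coset description of Fact~\ref{Rea:1}\ref{Th:28}) guarantees that $\lambda x^*+(1-\lambda)y^*\in A(\lambda x+(1-\lambda)y)$ and $x^*-y^*\in A(x-y)$, so after dividing by $2$ the three quadratic terms are admissible values of $q_A$ at the respective points and the right-hand side is $\lambda(1-\lambda)q_A(x-y)=\tfrac12\lambda(1-\lambda)\scal{x-y}{Ax-Ay}$. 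This is exactly \eqref{deli:1}; the identity is literal once $q_A$ is single-valued, and holds along matched selections otherwise.

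For the equivalence I would first note that $\dom A$ is a linear subspace: it contains $0$ because $A0$ is a subspace (Fact~\ref{Rea:1}\ref{Th:26}), and it is stable under the combinations of Fact~\ref{Rea:1}\ref{Th:30}; hence $x-y$ ranges over all of $\dom A$. If $A$ is monotone, Proposition~\ref{P:a}\ref{Th:31} makes $q_A$ single-valued and nonnegative on $\dom A$, so \eqref{deli:1} with $\lambda\in[0,1]$ has right-hand side $\lambda(1-\lambda)q_A(x-y)\geq 0$, which is precisely the convexity inequality $q_A(\lambda x+(1-\lambda)y)\leq \lambda q_A(x)+(1-\lambda)q_A(y)$. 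Conversely, if $q_A$ is single-valued and convex, then for $\lambda\in\left]0,1\right[$ the left-hand side of \eqref{deli:1} is nonnegative by convexity while $\lambda(1-\lambda)>0$, forcing $q_A(x-y)\geq 0$; as $x-y$ exhausts $\dom A$ this yields $\scal{z}{Az}\geq 0$ for every $z\in\dom A$, and together with single-valuedness Proposition~\ref{P:a}\ref{Th:31} returns the monotonicity of $A$.

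I do not expect a deep obstacle here: the real work is the bilinear expansion, and the only point requiring care is that $q_A$ may be set-valued while proving \eqref{deli:1}, since single-valuedness is only available afterwards in the monotone case. Routing the computation through fixed selections $x^*\in Ax$, $y^*\in Ay$ sidesteps this, because the selected values are genuine admissible values of $q_A$ at each point; once \eqref{deli:1} is secured, the equivalence is a two-line deduction from Proposition~\ref{P:a}\ref{Th:31}.
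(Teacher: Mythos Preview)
Your proof is correct and follows essentially the same route as the paper: expand the bilinear form along compatible selections using Fact~\ref{Rea:1}\ref{Th:30} to obtain \eqref{deli:1}, and then read off the equivalence from Proposition~\ref{P:a}\ref{Th:31}. Your treatment is actually more careful than the paper's terse proof, which invokes Proposition~\ref{P:a}\ref{Th:27}\&\ref{Th:32} (a result stated under a monotonicity hypothesis) to declare $q_A$ single-valued before establishing \eqref{deli:1}; your device of working with fixed selections $x^*\in Ax$, $y^*\in Ay$ avoids that circularity and makes the identity valid for arbitrary linear relations, as the statement requires.
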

\begin{proof}
Proposition~\ref{P:a}\ref{Th:27}\&\ref{Th:32} shows
that $q_A$ is single-valued on $\dom A$.
Combining with Proposition~\ref{P:a}\ref{Th:32}, we obtain
\eqref{deli:1}.
The characterization now follows from
Proposition~\ref{P:a}\ref{Th:31}.
\end{proof}

\begin{proposition}\label{j22}
Let $A\colon X \To X^*$ be a maximal monotone linear relation.
Then $(\dom A)^{\perp}= A0$ and hence
$\overline{\dom A}=(A0)^\bot$.
\end{proposition}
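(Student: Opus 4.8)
The plan is to prove the identity $(\dom A)^{\perp}=A0$ first, and then deduce the closure formula $\overline{\dom A}=(A0)^\perp$ by passing to orthogonal complements. One of the two inclusions needed for the first identity is already in hand: Proposition~\ref{P:a}\ref{Th:27} gives $A0\subset(\dom A)^{\perp}$ (and indeed this holds for any monotone linear relation, without maximality). So the entire substance of the argument is the reverse inclusion $(\dom A)^{\perp}\subset A0$, and this is precisely the place where maximality must be used.

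To obtain the reverse inclusion, I would fix $z^*\in(\dom A)^{\perp}$ and show that $(0,z^*)\in\gra A$. Since $A$ is maximal monotone, its graph contains every pair that is monotonically related to it; hence it suffices to verify that $(0,z^*)$ is monotonically related to $\gra A$. For an arbitrary $(y,y^*)\in\gra A$ one expands
\begin{equation*}
\scal{0-y}{z^*-y^*}=\scal{y}{y^*}-\scal{y}{z^*}.
\end{equation*}
Here $\scal{y}{z^*}=0$ because $y\in\dom A$ and $z^*\in(\dom A)^{\perp}$, while $\scal{y}{y^*}=\scal{y}{Ay}\geq 0$ by the monotonicity characterization of Proposition~\ref{P:a}\ref{Th:31}. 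Thus the pairing is nonnegative for every $(y,y^*)\in\gra A$, so $(0,z^*)$ is monotonically related to $\gra A$, and maximality forces $(0,z^*)\in\gra A$, i.e.\ $z^*\in A0$. Combining the two inclusions yields $(\dom A)^{\perp}=A0$.

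For the second assertion I would simply take orthogonal complements of both sides of $(\dom A)^{\perp}=A0$, obtaining $(\dom A)^{\perp\perp}=(A0)^{\perp}$, and then invoke the fact that in the reflexive space $X$ the biorthogonal of the linear subspace $\dom A$ coincides with its closure, so that $(\dom A)^{\perp\perp}=\overline{\dom A}$. There is no deep obstacle in this proof: once one recognizes that maximality is exactly the tool that upgrades ``monotonically related to $\gra A$'' into ``belongs to $\gra A$,'' everything reduces to the short pairing computation above. The only point demanding a little care is the final complementation step, where reflexivity of $X$ is what allows $(\dom A)^{\perp\perp}$ (a priori living in $X^{**}$) to be identified with $\overline{\dom A}\subset X$.
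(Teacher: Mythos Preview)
Your proof is correct. The paper reaches the key inclusion $(\dom A)^\perp\subset A0$ by a slightly different packaging: instead of fixing $z^*\in(\dom A)^\perp$ and checking that $(0,z^*)$ is monotonically related to $\gra A$, it observes in one stroke that $A+N_{\dom A}=A+(\dom A)^\perp$ is a monotone extension of $A$, hence equals $A$ by maximality; evaluating at $0$ then gives $A0+(\dom A)^\perp=A0$, so $(\dom A)^\perp\subset A0$. Your element-wise verification is arguably more transparent---no normal-cone operator enters, and the pairing computation is displayed explicitly---while the paper's version emphasizes the ``monotone extension'' viewpoint and absorbs the whole subspace $(\dom A)^\perp$ at once. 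Both rest on the same underlying fact: vectors in $(\dom A)^\perp$ pair to zero with $\dom A$, so adjoining them cannot spoil monotonicity. The paper does not spell out the ``hence'' clause; your treatment via biorthogonals and reflexivity is exactly what is intended.
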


\begin{proof}
Since
$A+N_{\dom A}=A+(\dom A)^{\perp}$ is a monotone extension of $A$ and $A$ is
maximal monotone, we must have
$A+(\dom A)^{\perp}=A$. Then $A0+(\dom A)^{\perp}=A0$. As $0\in A0$,
$(\dom A)^{\perp}\subset A0.$ The reverse inclusion follows from
Proposition~\ref{P:a}\ref{Th:27}.
 \end{proof}

\section{Domain, Range, Kernel, and Adjoint}
\label{s:3}

In this section, we study relationships among domains,
ranges and kernels of a maximal monotone linear relation and its adjoint.

\begin{fact}[Br\'ezis-Browder] \emph{\cite[Theorem~2]{Brezis-Browder}}
\label{Sv:7}
Let $A\colon X \To X^*$ be a monotone linear relation
such that $\gra A$ is closed. Then the following are equivalent.
\begin{enumerate}
\item
$A$ is maximal monotone.
\item
$A^*$ is maximal monotone.
\item
$A^*$ is monotone.
\end{enumerate}
\end{fact}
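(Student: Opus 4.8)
The plan is to reduce the three-way equivalence to two implications, using throughout the reflexivity identity $A^{**}=A$ from Fact~\ref{Rea:1}\ref{Sia:1} (valid since $\gra A$ is closed). The implication (ii)$\Rightarrow$(iii) is immediate, as every maximal monotone operator is monotone. I would then isolate two lemmas: \emph{(L1)} if $A$ is maximal monotone then $A^*$ is monotone, giving (i)$\Rightarrow$(iii); and \emph{(L2)} if $A$ is a monotone linear relation with closed graph and $A^*$ is monotone, then $A$ is maximal monotone, giving (iii)$\Rightarrow$(i). Together these yield (i)$\Leftrightarrow$(iii). To close the loop I would apply (L2) to $A^*$ in place of $A$: the graph $\gra A^*$ is closed, $(A^*)^*=A^{**}=A$ is monotone by hypothesis, and $A^*$ is monotone by (iii), so (L2) forces $A^*$ to be maximal monotone, which is (iii)$\Rightarrow$(ii).

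For \emph{(L1)} I would argue by contradiction. Let $(y,y^*)\in\gra A^*$; by Proposition~\ref{P:a}\ref{Th:31} and linearity of $\gra A^*$ it suffices to show $\scal{y}{y^*}\geq 0$. Suppose $\scal{y}{y^*}<0$. The defining identity of the adjoint gives $\scal{y}{a^*}=\scal{a}{y^*}$ for every $(a,a^*)\in\gra A$, so the cross terms cancel and $\scal{y-a}{-y^*-a^*}=-\scal{y}{y^*}+\scal{a}{a^*}\geq-\scal{y}{y^*}>0$; thus $(y,-y^*)$ is monotonically related to $\gra A$. Maximality of $A$ then yields $(y,-y^*)\in\gra A$, so $y\in\dom A$ and $-y^*\in Ay$. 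Since $y\in\dom A\subset(A0)^{\perp}$ by Proposition~\ref{P:a}\ref{Th:27} and $\scal{y}{A^*0}=\{0\}$ (as $A^*0\subset(\dom A)^{\perp}$), Fact~\ref{Rea:1}\ref{Sia:2b} applied to the single point $y\in\dom A\cap\dom A^*$ gives $\scal{y}{y^*}=\scal{A^*y}{y}=\scal{y}{Ay}=\scal{y}{-y^*}$, whence $\scal{y}{y^*}=0$, a contradiction. Hence $A^*$ is monotone.

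For \emph{(L2)}, the substantial direction, let $(x,x^*)$ be monotonically related to $\gra A$; I must show $x^*\in Ax$. Testing $\scal{x-a}{x^*-a^*}\geq 0$ against $(0,z^*)\in\gra A$ with $z^*\in A0$ and rescaling shows $x\in(A0)^{\perp}$, so by Proposition~\ref{P:a}\ref{Th:32}\&\ref{Th:32+} the map $a\mapsto\scal{x}{Aa}$ is a well-defined linear functional on $\dom A$; taking $a=0$ gives $\scal{x}{x^*}\geq 0$, and replacing $(a,a^*)$ by $(ta,ta^*)$ and letting the resulting nonnegative quadratic in $t$ have nonpositive discriminant yields the estimate $\big(\scal{x}{Aa}+\scal{a}{x^*}\big)^2\leq 4\scal{x}{x^*}\scal{a}{Aa}$ for all $a\in\dom A$. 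The plan is to upgrade this to $x\in\dom A$, and then to invoke Proposition~\ref{P:a}\ref{Th:31+}, which gives $x^*-x_0^*\in(\dom A)^{\perp}$ for any $x_0^*\in Ax$; since $(\dom A)^{\perp}=A^*0$, it remains to prove $x^*-x_0^*\in A0$, after which $x^*\in x_0^*+A0=Ax$ by Fact~\ref{Rea:1}\ref{Th:28}.

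The hard part will be the two closure steps of (L2): deducing $x\in\dom A$ from the discriminant estimate, and promoting the conclusion $x^*-x_0^*\in A^*0$ to $x^*-x_0^*\in A0$. Both are genuinely infinite-dimensional, and this is precisely where the standing hypotheses are consumed. I would exploit reflexivity together with the closed-graph identities $\overline{\dom A}=(A^*0)^{\perp}$, $(A0)^{\perp}=\overline{\dom A^*}$ and $(\ker A)^{\perp}=\overline{\ran A^*}$ from Fact~\ref{Rea:1}\ref{Sia:1}, the bipolar theorem (so the closed subspaces $A0$ and $\dom A$ equal their biorthogonals), and, crucially, the monotonicity of $A^*$ to bound the linear functional controlled by the discriminant estimate and to rule out any component of $x^*-x_0^*$ transverse to $A0$. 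I expect the identification of $x$ as a point of $\dom A$ to be the decisive and most delicate step, since it is exactly the passage from a quadratic growth bound to membership in the domain that fails without closedness and reflexivity.
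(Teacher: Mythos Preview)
The paper does not supply a proof of this statement; it is quoted as an external result from \cite{Brezis-Browder}, so there is no in-paper argument to compare against.

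Assessing your outline on its own merits: the overall reduction scheme is sound, and your proof of \emph{(L1)} is correct and complete. The cross-term cancellation indeed shows that $(y,-y^*)$ is monotonically related to $\gra A$, maximality then places it in $\gra A$, and Fact~\ref{Rea:1}\ref{Sia:2b} forces $\scal{y}{y^*}=-\scal{y}{y^*}$, a clean contradiction.

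For \emph{(L2)}, however, you have a plan rather than a proof, and the two steps you yourself flag as ``hard'' are the entire substance of the Br\'ezis--Browder theorem. The discriminant inequality you derive is correct but does not by itself yield $x\in\dom A$: it only says that the linear form $a\mapsto\scal{x}{Aa}+\scal{a}{x^*}$ on $\dom A$ is dominated by $2\sqrt{\scal{x}{x^*}}\,\sqrt{2q_A(a)}$, and since $\dom A$ need not be closed, a bounded-functional argument places $x$ at best in $\overline{\dom A}=(A^*0)^\perp$, not in $\dom A$. Likewise, the bipolar identities in Fact~\ref{Rea:1}\ref{Sia:1} give $(\dom A)^\perp=A^*0$, not $A0$, so from $x^*-x_0^*\in(\dom A)^\perp$ you cannot conclude $x^*\in Ax$ without first showing $A^*0\subset A0$ --- and that inclusion is false in general for a merely monotone (non-maximal) $A$. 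The original Br\'ezis--Browder argument does not separate these two issues; it proceeds via a variational/surjectivity construction in which the monotonicity of $A^*$ enters structurally, not just as a bound. As written, your \emph{(L2)} has not engaged that mechanism, so the implication (iii)$\Rightarrow$(i), and hence the full equivalence, remains unproved.
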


The next result generalizes \cite[Proposition~3.1]{BBW}
from linear operators to linear relations.

\begin{theorem}\label{sia:3}
Let $A\colon X \To X^*$ be a maximal monotone linear relation.
Then the following hold.
\begin{enumerate}
\item \label{sia:3i}
$\ker A=\ker A^*$.
\item \label{sia:3ii}
$\overline{\ran A}=\overline{\ran A^*}$.
\item \label{sia:3iii}
$(\dom A^*)^{\bot}= A^*0=A0=(\dom A)^{\perp}$.
\item \label{sia:3iv}
$\overline{\dom A^*}=\overline{\dom A}$.
\end{enumerate}
\end{theorem}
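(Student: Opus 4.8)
The plan is to exploit throughout the symmetry between $A$ and $A^*$. Since $A$ is maximal monotone, $\gra A$ is a closed subspace, so the Br\'ezis--Browder theorem (Fact~\ref{Sv:7}) guarantees that $A^*$ is again maximal monotone, and Fact~\ref{Rea:1}\ref{Sia:1} gives $A^{**}=A$. Consequently, every inclusion I establish for a generic maximal monotone linear relation can be re-applied with $A$ replaced by $A^*$ (whose graph is again closed, being maximal monotone) to obtain the reverse inclusion after using $A^{**}=A$. I would prove \ref{sia:3iii} first, as it is the linchpin for \ref{sia:3iv} and parallels the mechanism behind \ref{sia:3i}--\ref{sia:3ii}.

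For \ref{sia:3iii}, Proposition~\ref{j22} applied to $A$ yields $(\dom A)^{\perp}=A0$, and applied to the maximal monotone relation $A^*$ yields $(\dom A^*)^{\perp}=A^*0$. It then remains only to identify $A0=A^*0$. Here Proposition~\ref{P:a}\ref{Th:27} supplies the inclusion $A0\subset A^*0$ directly; invoking the same inclusion for $A^*$ and using $A^{**}=A$ gives $A^*0\subset A^{**}0=A0$, so the two coincide. Part \ref{sia:3iv} is then immediate, since Proposition~\ref{j22} gives $\overline{\dom A}=(A0)^{\perp}=(A^*0)^{\perp}=\overline{\dom A^*}$.

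The genuine work is in \ref{sia:3i}, which I would prove by a direct scaling argument rather than by duality. Let $x\in\ker A$, so $(x,0)\in\gra A$; since $\gra A$ is a linear subspace, $(\lambda x,0)\in\gra A$ for every $\lambda\in\RR$. Testing monotonicity of the pair $(\lambda x,0)$ against an arbitrary $(a,a^*)\in\gra A$ produces $\lambda\scal{x}{a^*}\leq\scal{a}{a^*}$ for all $\lambda\in\RR$, which forces $\scal{x}{a^*}=0$. As $(a,a^*)$ ranges over $\gra A$, this says exactly that $(0,-x)\in(\gra A)^{\perp}$, i.e.\ $(x,0)\in\gra A^*$, i.e.\ $x\in\ker A^*$; hence $\ker A\subset\ker A^*$. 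Applying the same argument to $A^*$ (monotone by Fact~\ref{Sv:7}, with $(A^*)^*=A$) yields $\ker A^*\subset\ker A$, and equality follows. I expect this linear-subspace-plus-monotonicity trick, together with the care needed to read off adjoint membership from the orthogonality relation defining $\gra A^*$, to be the main obstacle; everything else is bookkeeping with the cited facts.

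Finally, \ref{sia:3ii} follows from \ref{sia:3i} by duality. Fact~\ref{Rea:1}\ref{Sia:1} gives $(\ker A)^{\perp}=\overline{\ran A^*}$, and the same fact applied to $A^*$ gives $(\ker A^*)^{\perp}=\overline{\ran A^{**}}=\overline{\ran A}$. Since $\ker A=\ker A^*$ by \ref{sia:3i}, taking annihilators yields $\overline{\ran A}=\overline{\ran A^*}$, which completes the argument.
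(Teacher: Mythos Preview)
Your proposal is correct and follows essentially the same approach as the paper: Br\'ezis--Browder to make $A^*$ maximal monotone, a scaling/monotonicity argument for $\ker A\subset\ker A^*$ (the paper expands $\langle \alpha x+y,A(\alpha x+y)\rangle$, you test $(\lambda x,0)$ against $(a,a^*)$ directly---same idea), duality via Fact~\ref{Rea:1}\ref{Sia:1} for the ranges, and Proposition~\ref{j22} plus symmetry for \ref{sia:3iii}--\ref{sia:3iv}. The only cosmetic differences are the order in which you handle the four items and that, for $A0=A^*0$, you invoke Proposition~\ref{P:a}\ref{Th:27} twice whereas the paper combines Proposition~\ref{j22} with Fact~\ref{Rea:1}\ref{Sia:1}; both routes are equally short.
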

\begin{proof} By Fact~\ref{Sv:7},
 $A^*$ is maximal monotone.

\ref{sia:3i}: Let $x\in \ker A$, $y\in \dom A$, and $\alpha\in\RR$.
Then
\begin{equation}0\leq\langle \alpha x+y, A(\alpha x+y)\rangle=\alpha^2
\langle x, Ax\rangle+\alpha\langle x, Ay\rangle+\alpha\langle y,
Ax\rangle+\langle y, Ay\rangle.\label{JC:1}\end{equation}
Since $0\in Ax$, Fact~\ref{Rea:1}\ref{Th:28} yields $Ax=A0$.
By Proposition~\ref{P:a}\ref{Th:27},
$\langle x, Ax\rangle=0$ and $\alpha\langle y, Ax\rangle=0$.
Hence, in view of \eqref{JC:1},
$0\leq \alpha \langle x, Ay\rangle+\langle y, Ay\rangle$.
It follows that $\langle x,Ay\rangle=0$. Hence
$(0,-x)\in (\gra A)^{\bot}$,
i.e., $0\in A^*x$.
Therefore, $\ker A\subset\ker A^*$. On the other hand, applying this line
of thought to $A^*$, we obtain
$\ker A^*\subset\ker A^{**}=\ker A$. Altogether, $\ker A = \ker A^*$.

\ref{sia:3ii}: Combine
\ref{sia:3i} and Fact~\ref{Rea:1}\ref{Sia:1}.

\ref{sia:3iii}:
As $A^*$ is maximal monotone, it follows from Proposition~\ref{j22} that
$(\dom A^*)^{\perp}=A^*0$.  In view of Fact~\ref{Rea:1}\ref{Sia:1}
and the maximal monotonicity of $A$, we have
$ (\dom A^*)^{\perp}=A^*0=A0=(\dom A)^{\perp}$, thus $ (\dom A^*)^{\perp}=(\dom A)^{\perp}$.

\ref{sia:3iv}:
Apply $\bot$ to \ref{sia:3iii}.
\end{proof}

\begin{corollary}
Let $A\colon X \To X^*$ be a maximal monotone linear relation
such that $\overline{\dom A}=X$. Then both $A$ and $A^*$
are single-valued and linear on their respective domains.
\end{corollary}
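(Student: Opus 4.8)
The plan is to combine the density hypothesis $\overline{\dom A}=X$ with Theorem~\ref{sia:3}\ref{sia:3iii} to force both $A0$ and $A^*0$ to be trivial, and then to read off single-valuedness from Fact~\ref{Rea:1}\ref{Th:28} and linearity from Fact~\ref{Rea:1}\ref{Th:30}. By Fact~\ref{Sv:7}, $A^*$ is itself a maximal monotone linear relation, so every statement proved for $A$ applies verbatim to $A^*$.

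First I would compute the annihilator of the domain. Since $\dom A$ is a linear subspace (being the image of the subspace $\gra A$ under the first-coordinate projection) and $\overline{\dom A}=X$, any continuous linear functional vanishing on $\dom A$ vanishes on all of $X$ by continuity; hence $(\dom A)^{\perp}=\{0\}$. Invoking Theorem~\ref{sia:3}\ref{sia:3iii}, which asserts $(\dom A^*)^{\perp}=A^*0=A0=(\dom A)^{\perp}$, I conclude immediately that $A0=A^*0=\{0\}$.

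Single-valuedness then follows at once. By Fact~\ref{Rea:1}\ref{Th:28}, for every $(x,x^*)\in\gra A$ we have $Ax=x^*+A0=\{x^*\}$, so $Ax$ is a singleton for each $x\in\dom A$ and $A$ is single-valued on its domain; the identical argument applied to $A^*$, using $A^*0=\{0\}$, shows that $A^*$ is single-valued on $\dom A^*$.

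Finally, for linearity I would appeal to Fact~\ref{Rea:1}\ref{Th:30}: for $x,y\in\dom A$ and $(\alpha,\beta)\in\RR^2\smallsetminus\{0,0\}$ one has $A(\alpha x+\beta y)=\alpha Ax+\beta Ay$, while the degenerate case $(\alpha,\beta)=(0,0)$ gives $A(0)=A0=\{0\}=\alpha Ax+\beta Ay$. Since $A$ is now single-valued, these set identities are genuine equalities of vectors, so $A$ is a linear map on the subspace $\dom A$, and symmetrically $A^*$ is linear on $\dom A^*$. I do not anticipate any serious obstacle here: the entire content is carried by Theorem~\ref{sia:3}\ref{sia:3iii}, and the only points requiring minor care are the trivial-annihilator computation and the separate handling of the $(\alpha,\beta)=(0,0)$ case when invoking Fact~\ref{Rea:1}\ref{Th:30}.
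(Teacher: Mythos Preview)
Your proof is correct and follows exactly the route the paper intends: the corollary is stated without proof immediately after Theorem~\ref{sia:3}, and your derivation---use $(\dom A)^\perp=\{0\}$ together with Theorem~\ref{sia:3}\ref{sia:3iii} to force $A0=A^*0=\{0\}$, then read off single-valuedness and linearity from Fact~\ref{Rea:1}\ref{Th:28} and~\ref{Th:30}---is precisely the implicit argument.
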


\begin{corollary}
Let $A\colon \RR^n\To \RR^n$ be a maximal monotone linear relation.
Then
$\kernel A=\kernel A^*$, $\ran A=\ran A^*$, and
$\dom A=\dom A^*=(A0)^{\perp}=(A^*0)^{\perp}$.
\end{corollary}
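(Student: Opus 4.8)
The plan is to read this corollary as the finite-dimensional shadow of Theorem~\ref{sia:3}. Since $\RR^n$ is a reflexive Banach space and $\gra A$ is a linear subspace of $\RR^n\times\RR^n$ (hence automatically closed), Theorem~\ref{sia:3} applies verbatim, and the only work is to strip away the closure operators, exploiting the fact that every linear subspace of $\RR^n$ is closed. In particular, $\kernel A=\kernel A^*$ is nothing but Theorem~\ref{sia:3}\ref{sia:3i}, with nothing to add.

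For the ranges, I would first observe that $\ran A$ is the image of the linear subspace $\gra A$ under the coordinate projection $(x,x^*)\mapsto x^*$, so $\ran A$ is a linear subspace of $\RR^n$, and likewise $\ran A^*$; being finite-dimensional subspaces, both are closed. Hence $\ran A=\overline{\ran A}$ and $\ran A^*=\overline{\ran A^*}$, and feeding these into Theorem~\ref{sia:3}\ref{sia:3ii} yields $\ran A=\ran A^*$. The domain chain is handled the same way, but threads three earlier results together. Since $\dom A$ and $\dom A^*$ are projections of linear subspaces, they too are closed linear subspaces, so Theorem~\ref{sia:3}\ref{sia:3iv} collapses to $\dom A=\dom A^*$. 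To identify the orthogonal complements, I would invoke Proposition~\ref{j22} for $A$, giving $\dom A=\overline{\dom A}=(A0)^{\perp}$, and---since $A^*$ is maximal monotone by Fact~\ref{Sv:7}---the same proposition for $A^*$, giving $\dom A^*=(A^*0)^{\perp}$; finally $A0=A^*0$ from Theorem~\ref{sia:3}\ref{sia:3iii} identifies $(A0)^{\perp}$ with $(A^*0)^{\perp}$, completing $\dom A=\dom A^*=(A0)^{\perp}=(A^*0)^{\perp}$.

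There is no genuine obstacle here: the statement is a bookkeeping specialization of Theorem~\ref{sia:3}, and the only thing to guard against is forgetting to justify that each of $\kernel A$, $\ran A$, and $\dom A$ is a linear subspace---and therefore closed in $\RR^n$---which is precisely what licenses dropping every closure bar.
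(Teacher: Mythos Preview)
Your proposal is correct and matches the paper's intent: the corollary is stated without proof precisely because it is the finite-dimensional specialization of Theorem~\ref{sia:3}, with closures dropped since domains, ranges, and kernels are linear subspaces of $\RR^n$. One minor simplification: the full domain chain can be read off directly by taking orthogonal complements in Theorem~\ref{sia:3}\ref{sia:3iii} and using $S^{\perp\perp}=S$ for subspaces of $\RR^n$, so the separate appeals to Proposition~\ref{j22} and Fact~\ref{Sv:7} are not strictly needed.
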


\begin{remark}
Consider Theorem~\ref{sia:3}\ref{sia:3ii}.
The Volterra operator illustrates that $\ran A$ and
$\ran A^*$ are not comparable in general (see \cite[Example~3.3]{BBW}).
Considering the inverse of the Volterra
operator, we obtain an analogous negative statement for the domain.
\end{remark}

\section{Maximal Monotone Operators with Convex Graphs}
\label{s:4}
In this section, $X$ is not assumed to be reflexive.
In \cite[Corollary~4.1]{BK}, Butnariu and Kassay discuss monotone operators with
closed convex graphs; see also \cite[Section~46]{Si}.
We shall show that if the graph
of a maximal monotone operator is convex, then the graph must in fact
be affine (i.e., a translate of a linear subspace).

\begin{proposition}\label{n:1}
Let $A\colon X \rightrightarrows X^*$ be maximal monotone such that
$\gra A$ is a convex cone. Then $\gra A$ is
a linear subspace of $X \times X^*$.
\end{proposition}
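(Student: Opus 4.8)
The plan is to exploit the elementary fact that a convex cone is a linear subspace precisely when it is symmetric, i.e., closed under negation ($C = -C$, equivalently $-C \subseteq C$). Thus it suffices to show that $(x,x^*) \in \gra A$ implies $(-x,-x^*) \in \gra A$. Since $A$ is maximal monotone, membership in $\gra A$ can be certified by monotone relatedness: it is enough to prove that $(-x,-x^*)$ is monotonically related to $\gra A$, and maximality then forces $(-x,-x^*) \in \gra A$.

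Concretely, I would fix $(x,x^*) \in \gra A$ and an arbitrary $(y,y^*) \in \gra A$ and check the defining inequality $\scal{-x-y}{-x^*-y^*} \geq 0$, which is the same as $\scal{x+y}{x^*+y^*} \geq 0$. The key observation is an auxiliary fact about monotone cones: for any single graph point $(u,u^*) \in \gra A$ one has $\scal{u}{u^*} \geq 0$. This follows by applying monotonicity to the two graph points $(u,u^*)$ and $(tu,tu^*)$ (the latter lies in $\gra A$ because $\gra A$ is a cone), which gives $(1-t)^2 \scal{u}{u^*} \geq 0$ and hence $\scal{u}{u^*} \geq 0$ upon choosing any $t > 0$ with $t \neq 1$.

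Since $\gra A$ is closed under addition, $(x+y,x^*+y^*) \in \gra A$; applying the auxiliary fact with $u = x+y$ and $u^* = x^*+y^*$ yields exactly $\scal{x+y}{x^*+y^*} \geq 0$. As $(y,y^*)$ was arbitrary, $(-x,-x^*)$ is monotonically related to $\gra A$, so maximality gives $(-x,-x^*) \in \gra A$. This establishes symmetry of the cone and hence that $\gra A$ is a linear subspace.

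I expect the only real subtlety to be the bookkeeping around the cone axioms — in particular, deriving $\scal{u}{u^*} \geq 0$ without assuming $0 \in \gra A$ or any closedness of the graph; the scalar-multiple trick with $t \neq 1$ sidesteps both concerns. Everything else is a direct application of monotonicity, the cone structure, and maximality, so there should be no serious obstacle once the reduction to symmetry is in place.
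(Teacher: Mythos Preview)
Your proposal is correct and follows essentially the same route as the paper: both arguments show $-\gra A \subset \gra A$ by checking that $(-x,-x^*)$ is monotonically related to $\gra A$ via the inequality $\scal{x+y}{x^*+y^*}\geq 0$ (which holds because $(x+y,x^*+y^*)\in\gra A$ by the cone property), and then invoke maximality. The only cosmetic differences are that the paper obtains $\scal{u}{u^*}\geq 0$ from $(0,0)\in\gra A$ rather than your scalar-multiple trick, and it cites a result of Rockafellar for the final ``symmetric convex cone $\Rightarrow$ subspace'' step that you treat as elementary.
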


\begin{proof}
Take $(x,x^*)\in \gra A$  and also
$(y,y^*)\in \gra A$. As $\gra A$ is a convex cone, we have
$(x,x^*)+ (y,y^*)=(x+y,\ x^*+y^*)\in \gra A$.
Since $(0,0)\in \gra A$,
we obtain
$0 \leq  \langle x+y,\ x^*+y^*\rangle =
\langle(-x)-y,\ (-x^*)-y^*\rangle$.
From the maximal monotonicity of $A$, it follows that
$-(x,x^*)\in\gra A$. Therefore,
\begin{equation} \label{e:backfromVan}
-\gra A\subset \gra A.
\end{equation}
A result due to Rockafellar
(see \cite[Theorem~2.7]{Rocky}, which is stated
in Euclidean space but the proof of which works without change
in our present setting)
completes the proof.
\end{proof}

\begin{theorem}\label{openanswer}
Let $A\colon X \rightrightarrows X^*$  be maximal monotone such that $\gra
A$ is convex. Then $\gra A$ is actually affine.
\end{theorem}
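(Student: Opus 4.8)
The plan is to reduce Theorem~\ref{openanswer} to Proposition~\ref{n:1} by translating the graph so that it passes through the origin and then showing the translated graph is a convex cone, at which point the proposition forces it to be a linear subspace, making the original graph affine. Concretely, fix any point $(a,a^*)\in\gra A$ and define the translated operator $B\colon X\rightrightarrows X^*$ by $\gra B:=\gra A-(a,a^*)$. Since translation preserves both convexity of the graph and maximal monotonicity (the inner product $\scal{x-y}{x^*-y^*}$ is invariant under adding a common vector to both arguments), $B$ is again maximal monotone with a convex graph, and now $(0,0)\in\gra B$. If I can show $\gra B$ is a convex cone, then Proposition~\ref{n:1} applies to $B$ and yields that $\gra B$ is a linear subspace; undoing the translation gives $\gra A=(a,a^*)+\gra B$, an affine set, as desired.

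The substance of the argument therefore lies in upgrading ``convex containing the origin'' to ``convex cone.'' The key step is to prove closure of $\gra B$ under nonnegative scaling: given $(x,x^*)\in\gra B$ and $\lambda\in\RPP$, I must show $(\lambda x,\lambda x^*)\in\gra B$. Convexity together with $(0,0)\in\gra B$ already gives that the segment $[0,1]$ of scalings stays in the graph, i.e.\ $(\mu x,\mu x^*)\in\gra B$ for every $\mu\in[0,1]$. The remaining task is to push scalars beyond $1$, and this is where maximal monotonicity must do the work: I would try to show that the ray point $(\lambda x,\lambda x^*)$ is monotonically related to $\gra B$ for every $\lambda>1$, which by maximality forces it into $\gra B$.

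I expect this ray-extension to be the main obstacle. To carry it out, take an arbitrary $(y,y^*)\in\gra B$ and examine $\scal{\lambda x-y}{\lambda x^*-y^*}$ for $\lambda>1$. Using that every convex combination of $(x,x^*),(0,0),(y,y^*)$ lies in $\gra B$ and applying monotonicity of $B$ to cleverly chosen pairs from this convex hull, I anticipate obtaining an inequality of the form $\scal{x-y}{x^*-y^*}\ge 0$ and $\scal{x}{x^*}\ge 0$ that combine to control the cross terms; the quantity $\scal{\lambda x-y}{\lambda x^*-y^*}=\lambda^2\scal{x}{x^*}-\lambda\big(\scal{x}{y^*}+\scal{y}{x^*}\big)+\scal{y}{y^*}$ should then be shown nonnegative for all $\lambda>1$ once the values at $\lambda=0$ and $\lambda=1$ and the sign of the leading coefficient are pinned down by monotonicity. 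Establishing the correct intermediate inequalities from the convex-combination structure is the delicate part; everything afterward is a routine invocation of Proposition~\ref{n:1} and reversal of the translation.
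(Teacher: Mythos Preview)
Your overall strategy is exactly the paper's: translate so that $(0,0)$ lies in the graph, show the translated graph is a convex cone, and invoke Proposition~\ref{n:1}. The only piece you left open is the ray-extension for $\lambda>1$, and the paper dispatches it in one line rather than via the quadratic analysis you sketch: since $\tfrac{1}{\lambda}\in\left]0,1\right[$, convexity with $(0,0)$ gives $\tfrac{1}{\lambda}(y,y^*)\in\gra B$, so monotonicity of $B$ between $(x,x^*)$ and $\tfrac{1}{\lambda}(y,y^*)$ yields
\[
\scal{\lambda x-y}{\lambda x^*-y^*}=\lambda^2\,\Scal{x-\tfrac{1}{\lambda}y}{\,x^*-\tfrac{1}{\lambda}y^*}\geq 0,
\]
and maximality forces $(\lambda x,\lambda x^*)\in\gra B$. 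This is precisely the ``cleverly chosen pair'' you were looking for; your three separate inequalities at $\lambda=0$, $\lambda=1$, and on the leading coefficient are by themselves insufficient (e.g.\ $a=1$, $b=100$, $c=99$ satisfies all three yet makes the quadratic negative at $\lambda=2$), so the scaling trick is not just cleaner but necessary.
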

\begin{proof}
Let $(x_0,x_0^*)\in \gra A $ and $B\colon X\rightrightarrows X^*$
be such that  $\gra B=\gra A - (x_0,x_0^*).$ Thus $\gra B$ is convex with $(0,0)\in \gra B$, and $B$ is  maximal monotone.
Take $\alpha \geq 0$ and $(x,x^*)\in \gra B$.
In view of Proposition~\ref{n:1},
it suffices to show that $\alpha(x,x^*)\in\gra B$.
If
$\alpha\leq 1$, then the convexity of $\gra B$ yields
$\alpha(x,x^*)=\alpha(x,x^*)+(1-\alpha)(0,0)\in \gra B$.
Thus assume that $\alpha>1$ and let $(y,y^*)\in \gra B$.
Using the  previous reasoning, we deduce that
$\tfrac{1}{\alpha}(y,y^*)\in \gra B$.
Thus,
$\langle \alpha x-y,\alpha x^*-y^*\rangle=\alpha^2\langle x-
\tfrac{1}{\alpha}y, x^*-\tfrac{1}{\alpha}y^*\rangle\geq 0$.
Since $B$ is maximal monotone,  $\alpha(x,x^*)\in \gra B$.
\end{proof}

\begin{remark}
Theorem~\ref{openanswer}
provides a complete answer to \cite[Problem~46.4 on page~183]{Si}.
Note also that for every nonzero closed proper subspace $L$ of $X$, the normal cone operator
$N_L = \partial \iota_{L}$ is a maximal monotone linear relation; however, neither $N_{L}$
nor its inverse is an affine mapping. 
\end{remark}

\section{The Fitzpatrick Function of a Sum}

\label{s:5}

Fitzpatrick functions --- introduced first by Fitzpatrick \cite{FIT} in
1988 (see also \cite{burachick,mlt,Penot2}) --- have turned out to be immensely useful in the study
of maximal monotone operators; see, e.g.,
\cite{Si} and the references therein.

\begin{definition}
Let $A\colon X\rightrightarrows X^*.$ The
\emph{Fitzpatrick function} of $A$ is
\begin{equation}
F_A\colon  (x,x^*)\mapsto \sup_{(y,y^*)\in
\gra A}\langle x, y^*\rangle+ \langle y,x^*\rangle-\langle y,y^*\rangle.
\end{equation}
\end{definition}

The following partial inf-convolution,
 introduced by Simons and Z\u{a}linescu \cite{SiZ}, plays an important role
in the study of the maximal monotonicity of the sum
of two maximal monotone operators.

\begin{definition}
Let $F_1, F_2\colon X\times X^*\rightarrow\RX$.
Then the \emph{partial inf-convolution} $F_1\Box_2 F_2$
is the function defined on $X\times X^*$ by
\begin{equation*}F_1\Box_2 F_2\colon
(x,x^*)\mapsto \inf_{y^*\in X^*}
F_1(x,x^*-y^*)+F_2(x,y^*).
\end{equation*}
\end{definition}
Let $A,B:X\To X^*$ be maximal monotone operators.
It is not hard to see that $F_{A+B}\leq F_{A}\Box_{2} F_{B}$;
moreover, equality may fail \cite[Proposition~4.2 and Example~4.7]{BMS}.
In \cite[Corollary~5.6]{BBW}, it was shown that
$F_{A+B}=F_{A}\Box_{2} F_{B}$ when $A,B$ are
continuous linear monotone operators and some constraint
qualification holds.
In this section, we substantially generalize
this result to maximal monotone linear relations.
Following \cite{Penot2}, it will be convenient to set
$ F^\intercal\colon X^*\times X\colon (x^*,x)\mapsto F(x,x^*)$,
when
$F\colon X\times X^*\to\RX$, and similarly
for a function defined on $X^*\times X$.

We start with some basic properties about Fitzpatrick functions.

\begin{proposition}\label{explicit}
Let $A\colon X\To X^*$ be monotone linear relation. Then
the following hold.
\begin{enumerate}
\item \label{expliciti}
$\gra (-A^*)=(\gra A^{-1})^{\perp}$.
\item \label{explicitii}
$F|_{\gra (-A^*)}\equiv 0$.
\end{enumerate}
\end{proposition}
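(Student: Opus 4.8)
The plan is to prove \ref{expliciti} by unwinding the definitions of the adjoint, the inverse relation, and the orthogonal complement, and then to deduce \ref{explicitii} directly from \ref{expliciti} and monotonicity (throughout, $F$ denotes the Fitzpatrick function $F_A$).

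First I would translate each side of \ref{expliciti} into an explicit orthogonality condition and check the two coincide. On the one hand, $(x,x^*)\in\gra(-A^*)$ means $(x,-x^*)\in\gra A^*$, i.e.\ $(-x^*,-x)\in(\gra A)^\bot$; testing this against an arbitrary $(a,a^*)\in\gra A$ yields
\begin{equation*}
\langle x^*,a\rangle+\langle a^*,x\rangle=0.
\end{equation*}
On the other hand, $\gra A^{-1}=\big\{(a^*,a)\mid (a,a^*)\in\gra A\big\}\subset X^*\times X$, and using reflexivity to identify the dual of $X^*\times X$ with $X\times X^*$, a pair $(x,x^*)$ lies in $(\gra A^{-1})^\bot$ exactly when $\langle a^*,x\rangle+\langle a,x^*\rangle=0$ for all $(a,a^*)\in\gra A$. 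By symmetry of the pairing these are the same linear condition $\langle x^*,a\rangle+\langle a^*,x\rangle=0$, so the two sets are equal. The only delicate point here --- and the step I expect to require the most care --- is the bookkeeping of which factor ($X$ versus $X^*$) each variable occupies and the correct reflexivity-induced pairing on $X^*\times X$; a misplaced slot or sign is the main risk, though the argument is otherwise routine.

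Given \ref{expliciti}, part \ref{explicitii} is immediate. Fix $(x,x^*)\in\gra(-A^*)$. By \ref{expliciti}, $\langle x,y^*\rangle+\langle y,x^*\rangle=0$ for every $(y,y^*)\in\gra A$, so the defining supremum collapses to
\begin{equation*}
F_A(x,x^*)=\sup_{(y,y^*)\in\gra A}\big(\langle x,y^*\rangle+\langle y,x^*\rangle-\langle y,y^*\rangle\big)=\sup_{(y,y^*)\in\gra A}\big(-\langle y,y^*\rangle\big).
\end{equation*}
Since $A$ is a linear relation, $(0,0)\in\gra A$ forces this supremum to be at least $0$; since $A$ is monotone, Proposition~\ref{P:a}\ref{Th:31} gives $\langle y,y^*\rangle\geq 0$ for all $(y,y^*)\in\gra A$, forcing it to be at most $0$. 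Hence $F_A(x,x^*)=0$, i.e.\ $F_A|_{\gra(-A^*)}\equiv 0$.
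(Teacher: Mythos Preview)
Your proof is correct and follows essentially the same approach as the paper: part~\ref{expliciti} is obtained by unwinding the definitions (the paper does this via the concise chain $(x,x^*)\in(\gra A^{-1})^\bot\Leftrightarrow(x^*,x)\in(\gra A)^\bot\Leftrightarrow(x,-x^*)\in\gra A^*\Leftrightarrow(x,x^*)\in\gra(-A^*)$, which amounts to the same verification), and part~\ref{explicitii} is deduced exactly as you do, using the orthogonality from \ref{expliciti} together with $(0,0)\in\gra A$ and monotonicity to squeeze the supremum to~$0$.
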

\begin{proof}
\ref{expliciti}: Take $(x,x^*)\in X\times X^*$. Then
$(x,x^*)\in (\gra A^{-1})^{\perp}
\Leftrightarrow (x^*,x)\in (\gra A)^{\perp}
\Leftrightarrow (x,-x^*)\in \gra A^* \Leftrightarrow
(x,x^*)\in\gra(-A^*)$.

\ref{explicitii}: Take $(x,x^*)\in\gra(-A^*)$.
By \ref{expliciti}, $(x^*,x)\in (\gra A)^{\perp}$.
Since
$(0,0)\in \gra A$ and $A$ is monotone, we have
$F_{A}(x,x^*)\geq 0$ and $\langle y,y^*\rangle \geq 0$ for every
$(y,y^*)\in\gra A$. This yields
\begin{equation*}
F_{A}(x,x^*) =\sup_{(y,y^*)\in\gra A}\langle x^*,y\rangle +
\langle x,y^*\rangle-\langle y^*,y\rangle =
\sup_{(y,y^*)\in \gra A} 0-\langle y^*,y\rangle\leq 0.
\end{equation*}
Altogether, we have $F_{A}(x,x^*)=0$.
\end{proof}

It turns out to be convenient to define
$$P_X \colon X\times X^*\to X\colon (x,x^*)\mapsto x.$$
We shall need the following facts for later proofs.

\begin{fact}[Fitzpatrick]\label{F1}
Let $A\colon X\To X^*$ be maximal monotone.  Then
 $F_A$ is proper lower semicontinuous and convex, and
$F_A^{*\intercal} \geq
F_A \geq \scal{\cdot}{\cdot}$.
\end{fact}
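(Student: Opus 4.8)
The plan is to derive all three assertions from the single observation that $F_A$ is a pointwise supremum, over $\gra A$, of the continuous affine functions $(x,x^*)\mapsto \scal{x}{y^*}+\scal{y}{x^*}-\scal{y}{y^*}$ indexed by $(y,y^*)\in\gra A$. \emph{Convexity and lower semicontinuity} are then immediate, since a supremum of a family of continuous affine functionals is always convex and lower semicontinuous; no monotonicity is needed for this part. For \emph{properness}, I would first record the key identity $F_A|_{\gra A}=\scal{\cdot}{\cdot}$: fixing $(x,x^*)\in\gra A$ and choosing $(y,y^*)=(x,x^*)$ in the defining supremum gives $F_A(x,x^*)\ge\scal{x}{x^*}$, while monotonicity of $A$ gives, for every $(y,y^*)\in\gra A$, the rearrangement of $\scal{x-y}{x^*-y^*}\ge 0$, namely $\scal{x}{y^*}+\scal{y}{x^*}-\scal{y}{y^*}\le\scal{x}{x^*}$, whence $F_A(x,x^*)\le\scal{x}{x^*}$. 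Since $\gra A\neq\emptyset$ and $F_A$ is a supremum over a nonempty set, $F_A>-\infty$ everywhere, and the identity shows $F_A$ is finite on $\gra A$; thus $F_A$ is proper.

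The inequality $F_A\ge\scal{\cdot}{\cdot}$ is where \emph{maximal} monotonicity enters, and it is the main point of the argument. Fix $(x,x^*)\in\ZZZ$. If $(x,x^*)$ is monotonically related to $\gra A$, then by maximality $(x,x^*)\in\gra A$, and the identity above yields $F_A(x,x^*)=\scal{x}{x^*}$. Otherwise there exists $(y,y^*)\in\gra A$ with $\scal{x-y}{x^*-y^*}<0$, i.e.\ $\scal{x}{y^*}+\scal{y}{x^*}-\scal{y}{y^*}>\scal{x}{x^*}$; since this quantity is one of the terms in the supremum defining $F_A(x,x^*)$, we obtain $F_A(x,x^*)>\scal{x}{x^*}$. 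In either case $F_A(x,x^*)\ge\scal{x}{x^*}$.

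Finally, for $F_A^{*\intercal}\ge F_A$ I would unwind the transposed conjugate. Using reflexivity to identify the dual of $\ZZZ$ with $X^*\times X$ under the pairing $\big((x,x^*),(u^*,u)\big)\mapsto\scal{x}{u^*}+\scal{u}{x^*}$, a direct computation gives $F_A^{*\intercal}(x,x^*)=\sup_{(z,z^*)}\scal{z}{x^*}+\scal{x}{z^*}-F_A(z,z^*)$. Restricting this supremum to $(z,z^*)\in\gra A$ and invoking the identity $F_A|_{\gra A}=\scal{\cdot}{\cdot}$ reproduces exactly the defining supremum of $F_A(x,x^*)$; hence $F_A^{*\intercal}\ge F_A$. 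The only genuinely nontrivial step is the inequality $F_A\ge\scal{\cdot}{\cdot}$, which rests entirely on the characterization of maximal monotonicity as ``monotonically related to $\gra A$ $\Rightarrow$ belongs to $\gra A$''; everything else is a formal consequence of the supremum structure together with the graph identity.
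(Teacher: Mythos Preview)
Your argument is correct and complete. The paper, however, does not actually prove this Fact: it merely records it as a citation to Fitzpatrick's original 1988 paper (specifically \cite[Corollary~3.9 and Proposition~4.2]{FIT}). So there is no ``paper's own proof'' to compare against beyond the reference.

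That said, what you have written is essentially the standard direct verification one obtains by unpacking Fitzpatrick's arguments: the supremum-of-affines structure gives convexity and lower semicontinuity for free; the graph identity $F_A|_{\gra A}=\scal{\cdot}{\cdot}$ (monotonicity for the upper bound, the diagonal choice for the lower) yields properness and drives the conjugate inequality; and the dichotomy ``monotonically related or not'' together with maximality gives $F_A\geq\scal{\cdot}{\cdot}$. Your proof is self-contained and would serve well if the paper wanted to be independent of \cite{FIT}; the only cost is a paragraph of routine computation in place of a one-line citation.
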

\begin{proof}
See \cite[Corollary~3.9 and Proposition~4.2]{FIT}.
\end{proof}

\begin{proposition}\label{Faz:1} Let $A\colon X\To X^*$ be a monotone
linear relation such that its graph is closed. Then
$ F_A^*\colon (x^*,x)\mapsto
\iota_{\gra A^{-1}}(x^*,x) + \scal{x}{x^*}$.
\end{proposition}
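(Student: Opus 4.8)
The plan is to recognize $F_A$ as a Fenchel conjugate and then recover the desired formula by biconjugation. Set $Z := X\times X^*$, so that by reflexivity $Z^* = X^*\times X$, with the pairing $\scal{(x,x^*)}{(u^*,u)} = \scal{x}{u^*}+\scal{u}{x^*}$. Define $h\colon Z\to\RX$ by $h(y,y^*) := \scal{y}{y^*} + \iota_{\gra A}(y,y^*)$. Computing the conjugate of $h$ straight from the definition gives, for $(x^*,x)\in Z^*$,
\[
h^*(x^*,x) = \sup_{(y,y^*)\in\gra A} \scal{y}{x^*}+\scal{x}{y^*}-\scal{y}{y^*} = F_A(x,x^*);
\]
that is, $h^* = F_A^\intercal$. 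The whole argument then reduces to establishing $h^{**}=h$ and carefully untangling this identity.

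First I would verify that $h$ is proper, lower semicontinuous, and convex. Properness is immediate since $(0,0)\in\gra A$ and $h(0,0)=0$. Lower semicontinuity holds because the duality pairing $(y,y^*)\mapsto\scal{y}{y^*}$ is norm-continuous on $X\times X^*$ and $\gra A$ is closed by hypothesis, so $h$ is the sum of a continuous function and the indicator of a closed set. Convexity is the one place where monotonicity enters: for $(y,y^*)\in\gra A$ one has $\scal{y}{y^*}=2q_A(y)$, so $h = 2q_A\circ P_X$ on the subspace $\gra A$; since $\gra A$ is a linear subspace and $q_A$ is convex by Proposition~\ref{rP:1}, the restriction is convex, whence $h$ is convex on $Z$.

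Next, because $X$ is reflexive, the Fenchel--Moreau theorem applies to the proper lsc convex function $h$ and yields $(h^*)^* = h$, the outer conjugate being taken on $(Z^*)^* = Z$. Writing this out, $(h^*)^*(x,x^*) = \sup_{(u^*,u)}\scal{x}{u^*}+\scal{u}{x^*}-h^*(u^*,u)$. Finally I would compute $F_A^*$ and match: since $F_A(y,y^*)=h^*(y^*,y)$, substituting and renaming the supremum variables gives $F_A^*(x^*,x) = \sup_{(y,y^*)}\scal{y}{x^*}+\scal{x}{y^*}-h^*(y^*,y) = (h^*)^*(x,x^*) = h(x,x^*)$. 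Hence $F_A^*(x^*,x) = \scal{x}{x^*}+\iota_{\gra A}(x,x^*)$, and since $(x,x^*)\in\gra A \iff (x^*,x)\in\gra A^{-1}$, this is exactly $\iota_{\gra A^{-1}}(x^*,x)+\scal{x}{x^*}$.

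The step I expect to be the main obstacle is not any single estimate but the bookkeeping of transposes and pairings between $Z$ and $Z^*$, where a slip in the order or sign of the arguments is the easiest way to derail the computation. The only genuinely nonformal ingredient is the convexity of $h$, which rests on the monotonicity of $A$ through Proposition~\ref{rP:1}; without monotonicity $h$ would not be convex and biconjugation would return only the closed convex hull of $h$ rather than $h$ itself.
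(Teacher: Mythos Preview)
Your proposal is correct and follows essentially the same approach as the paper: define the function $h(y,y^*)=\scal{y}{y^*}+\iota_{\gra A}(y,y^*)$ (the paper calls the transposed version $G$), observe that $F_A$ is its conjugate, verify via Proposition~\ref{rP:1} that $h$ is proper, lower semicontinuous and convex, and conclude by biconjugation. The only difference is cosmetic---the paper works on $X^*\times X$ from the outset, which saves the transpose bookkeeping you flagged as the main hazard.
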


\begin{proof} Define
$G\colon X^*\times X \to \RX\colon
(x^*,x)\mapsto \iota_{\gra A}(x,x^*) + \scal{x}{x^*}$.
By Proposition~\ref{P:a}\ref{Th:31}, $\scal{x}{x^*}=\scal{x}{Ax}$
for every $(x,x^*)\in\gra A$; then by Proposition~\ref{rP:1}, $G$ is  a convex function.
As  $\gra A$ is closed, $G$ is lower semicontinuous. Thus,
$G$ is a proper lower semicontinuous convex function.
By definition of $F_A$, $F_A=G^{*}$. Therefore,
we have $F_A^*=G^{**}=G$.
\end{proof}

We also set, for any set $S$ in a real vector space,
$$ \cone S := \bigcup_{\lambda>0} \lambda S = \menge{\lambda s}{\lambda>0
\text{~and~} s\in S}.$$

\begin{fact}[Simons-Z\u{a}linescu]\label{F5}
Let $A\colon X\To X^*$ be maximal monotone. Then the following hold.
\begin{enumerate}
 \item \label{F5i}
$\dom A\subset P_{X}\big(\dom  F_{A}^{*\intercal}\big)\subset P_X(\dom F_A)\subset \overline{\dom A}$.
 \item \label{F5ii}
Suppose that $A, B\colon X\To X^*$  are maximal monotone linear
relations and that $\dom A-\dom B$ is closed.
Then $A+B$ is maximal monotone.
\end{enumerate}
\end{fact}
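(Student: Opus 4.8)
The plan is to treat the two assertions separately, since \ref{F5i} is a localization statement about the Fitzpatrick function while \ref{F5ii} is a genuine sum theorem. For \ref{F5i}, the two easier inclusions come directly from Fact~\ref{F1}. The middle inclusion $P_X(\dom F_A^{*\intercal})\subseteq P_X(\dom F_A)$ is immediate from $F_A^{*\intercal}\geq F_A$, which forces $\dom F_A^{*\intercal}\subseteq\dom F_A$. For the first inclusion $\dom A\subseteq P_X(\dom F_A^{*\intercal})$ I would fix $(x,x^*)\in\gra A$ and show $F_A^{*\intercal}(x,x^*)=\scal{x}{x^*}<\infty$: inserting the admissible point $(x,x^*)$ into the supremum defining $F_A(y,y^*)$ gives $F_A(y,y^*)\geq\scal{y}{x^*}+\scal{x}{y^*}-\scal{x}{x^*}$ for every $(y,y^*)$, and feeding this bound into the definition of $F_A^*(x^*,x)$ collapses the supremum to $\scal{x}{x^*}$; together with $F_A^{*\intercal}\geq\scal{\cdot}{\cdot}$ from Fact~\ref{F1} this yields the equality, hence $x\in P_X(\dom F_A^{*\intercal})$.

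The last inclusion $P_X(\dom F_A)\subseteq\overline{\dom A}$ is the only nontrivial part of \ref{F5i}, and I would argue by contraposition. If $x_0\notin\overline{\dom A}$, then, since $\overline{\dom A}$ is closed and convex (Rockafellar), Hahn--Banach separation produces $z^*\in X^*$ with $\scal{x_0}{z^*}>\sup_{y\in\dom A}\scal{y}{z^*}$. The goal is then to show $F_A(x_0,x^*)=+\infty$ for every $x^*$ by exhibiting graph points $(y,y^*)\in\gra A$ along which the Fitzpatrick quotient $\scal{x_0}{y^*}+\scal{y}{x^*}-\scal{y}{y^*}$ is unbounded above, the growth being driven by the separating gap in the $z^*$ direction. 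I expect this blow-up step to be the main obstacle in \ref{F5i}.

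For \ref{F5ii} the strategy is the Fitzpatrick-function route to the sum theorem. The starting point is the inequality $F_{A+B}\leq F_A\Box_2 F_B$ together with the observation that $H:=F_A\Box_2 F_B$ is a representative function for $A+B$: it is convex, majorizes $\scal{\cdot}{\cdot}$, and reduces to $\scal{\cdot}{\cdot}$ exactly on $\gra(A+B)$. One then invokes the standard criterion that a monotone operator is maximal monotone as soon as it admits a proper lower semicontinuous convex representative $H$ with $H\geq\scal{\cdot}{\cdot}$ and $H^{*\intercal}\geq\scal{\cdot}{\cdot}$. Thus it suffices to show, under the hypothesis that $\dom A-\dom B$ is closed, that the partial inf-convolution $H$ is proper, lower semicontinuous, and exact, and that $H^{*\intercal}\geq\scal{\cdot}{\cdot}$.

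The heart of the argument, and the step I expect to be hardest, is to convert the domain constraint qualification into one at the level of the Fitzpatrick functions and then apply Fenchel duality (Attouch--Brezis). Concretely, part \ref{F5i} gives $\dom A\subseteq P_X(\dom F_A)\subseteq\overline{\dom A}$ and similarly for $B$, so closedness of $\dom A-\dom B$ transfers to the required closedness/interiority condition on $P_X(\dom F_A)-P_X(\dom F_B)$. This is exactly what is needed to guarantee that the inf-convolution defining $H$ is exact and lower semicontinuous and that the transposed conjugate $(F_A\Box_2 F_B)^{*\intercal}$ can be computed, yielding $H^{*\intercal}\geq\scal{\cdot}{\cdot}$. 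Once exactness and lower semicontinuity are secured, the maximality criterion applies and delivers the maximal monotonicity of $A+B$.
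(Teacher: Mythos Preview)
The paper does not actually prove this statement: it is recorded as a \emph{Fact} and the proof consists solely of the citations ``Combine \cite[Theorem~31.2]{Si} and \cite[Lemma~5.3(a)]{SiZ}'' for \ref{F5i} and ``See \cite[Theorem~5.5]{SiZ}'' for \ref{F5ii}. So you are supplying genuine arguments where the paper only points to the literature. Your route for \ref{F5ii} is precisely the Simons--Z\u{a}linescu strategy being cited (Fitzpatrick representatives, partial inf-convolution, Attouch--Br\'ezis/Fenchel duality under the constraint qualification, then the maximality criterion), and the transfer of the domain condition via \ref{F5i} is exactly how the paper itself later argues in Lemma~\ref{Co:1} and Proposition~\ref{F12}. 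That part is fine.

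For \ref{F5i}, the three easy inclusions are handled correctly. The remaining inclusion $P_X(\dom F_A)\subset\overline{\dom A}$ is where your sketch is imprecise. You propose, given $x_0\notin\overline{\dom A}$ and a separating functional $z^*$, to ``exhibit graph points $(y,y^*)\in\gra A$ along which the Fitzpatrick quotient is unbounded above, the growth being driven by the $z^*$ direction''. But $z^*$ lives in $X^*$ and there is no obvious way to steer \emph{graph points} of $A$ in that direction; for a general maximal monotone $A$ you have no handle on which $y^*$ occur. The clean argument varies the \emph{second} argument of $F_A$ instead: if $F_A(x_0,x^*)<\infty$ and $\sigma_{\dom A}(z^*)<\infty$, then for every $t>0$
\[
\scal{x_0}{x^*}+t\scal{x_0}{z^*}\;\leq\;F_A(x_0,x^*+tz^*)\;\leq\;F_A(x_0,x^*)+t\,\sigma_{\dom A}(z^*),
\]
the left inequality coming from Fact~\ref{F1} and the right from bounding $\scal{y}{tz^*}\leq t\,\sigma_{\dom A}(z^*)$ inside the supremum defining $F_A$. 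Dividing by $t$ and letting $t\to\infty$ gives $\scal{x_0}{z^*}\leq\sigma_{\dom A}(z^*)$ for every $z^*$, hence $x_0\in\overline{\dom A}$. This is the mechanism behind the cited results; once you replace your ``blow-up along graph points'' by this dual-variable argument, your proof of \ref{F5i} is complete.
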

\begin{proof}
\ref{F5i}: Combine \cite[Theorem~31.2]{Si} and \cite[Lemma~5.3(a)]{SiZ}.
\ref{F5ii}:  See \cite[Theorem~5.5]{SiZ}.
\end{proof}

\begin{fact}[Simons-Z\u{a}linescu]\label{F4}
Let $F_1, F_2\colon X\times X^* \to \RX$ be proper,
lower semicontinuous, and convex. Assume that
for every $(x,x^*)\in X\times X^*$,
\begin{equation*}(F_1\Box_2 F_2)(x,x^*)>-\infty
\end{equation*}
and that  $ \cone\big(P_X\dom F_1-P_X\dom F_2\big)$
is a closed subspace of $X$. Then for every $(x,x^*)\in X\times X^*$,
\begin{equation*}
(F_1\Box_2 F_2)^*(x^*,x)=\min_{y^*\in X^*}
F_1^*(x^*-y^*,x)+F_2^*(y^*,x).
\end{equation*}\end{fact}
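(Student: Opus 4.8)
The plan is to realize the partial inf-convolution as an \emph{infimal convolution through a linear map} on an enlarged product space and then reduce the conjugation to the classical exact Fenchel duality for sums, the constraint qualification being supplied by the hypothesis on $\cone(P_X\dom F_1-P_X\dom F_2)$.

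First I would work in the reflexive space $Z:=X\times X^*\times X^*$ and define $\Phi\colon Z\to\RX$ by $\Phi(x,u^*,v^*):=F_1(x,u^*)+F_2(x,v^*)$, together with the continuous linear map $L\colon Z\to X\times X^*$, $L(x,u^*,v^*):=(x,u^*+v^*)$. Since $Lz=(x,x^*)$ forces the first coordinate to be $x$ and $u^*+v^*=x^*$, one checks directly that $(F_1\Box_2 F_2)(x,x^*)=\inf_{Lz=(x,x^*)}\Phi(z)$; that is, $F_1\Box_2 F_2$ is the infimal postcomposition of $\Phi$ under $L$. The identity $(L\Phi)^*=\Phi^*\circ L^*$ is a pure sup/inf interchange valid for any $\Phi$, and reflexivity gives $L^*(x^*,x)=(x^*,x,x)$. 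Hence the whole problem collapses to computing $\Phi^*$ at the \emph{diagonal} argument $(x^*,x,x)$, namely $(F_1\Box_2 F_2)^*(x^*,x)=\Phi^*(x^*,x,x)$.

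Next I would split $\Phi=\Psi_1+\Psi_2$ on $Z$, where $\Psi_1(x,u^*,v^*):=F_1(x,u^*)$ ignores $v^*$ and $\Psi_2(x,u^*,v^*):=F_2(x,v^*)$ ignores $u^*$; both are proper, lsc, and convex, and a one-line computation gives $\Psi_1^*(p^*,p,w)=F_1^*(p^*,p)+\iota_{\{0\}}(w)$ and $\Psi_2^*(p^*,p,w)=F_2^*(p^*,w)+\iota_{\{0\}}(p)$. To pass from the sum to the infimal convolution I would invoke the exact Fenchel duality / Attouch--Br\'ezis theorem in the reflexive space $Z$ (see, e.g., \cite{Zalinescu}): if $\cone(\dom\Psi_1-\dom\Psi_2)$ is a closed subspace, then $(\Psi_1+\Psi_2)^*=\Psi_1^*\Box\Psi_2^*$ with the infimal convolution \emph{exact}, i.e.\ attained. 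Substituting the two conjugates, the two indicator terms force $t=0$ and $r=p$ in the convolution variables, collapsing it to $\inf_{r^*}F_1^*(r^*,p)+F_2^*(p^*-r^*,w)$; evaluating at $(p^*,p,w)=(x^*,x,x)$ and renaming $y^*:=x^*-r^*$ yields exactly $\min_{y^*}F_1^*(x^*-y^*,x)+F_2^*(y^*,x)$, the min (rather than inf) being justified by the exactness.

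The crux is to check that the abstract constraint qualification on $Z$ is precisely the stated hypothesis. Since $\Psi_1$ leaves $v^*$ free and $\Psi_2$ leaves $u^*$ free, the two $X^*$-factors in the domain difference become all of $X^*$, giving $\dom\Psi_1-\dom\Psi_2=(P_X\dom F_1-P_X\dom F_2)\times X^*\times X^*$; applying $\cone$ and using $\cone(S\times W)=\cone(S)\times W$ for a subspace $W$ shows that $\cone(\dom\Psi_1-\dom\Psi_2)$ is a closed subspace of $Z$ if and only if $\cone(P_X\dom F_1-P_X\dom F_2)$ is a closed subspace of $X$, which is the assumed constraint qualification. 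The remaining hypothesis $(F_1\Box_2 F_2)(x,x^*)>-\infty$ for all $(x,x^*)$ serves to guarantee that $L\Phi$ is proper, so that the biconjugation and the sum rule apply to genuinely proper functions. I expect the main obstacle to be exactly this bookkeeping --- tracking the pairing conventions on the iterated product dual spaces so that the conjugate variables land in the correct slots, and verifying properness --- rather than any deep analytic point, since the analytic content is entirely carried by the cited exact duality theorem.
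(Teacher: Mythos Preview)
Your argument is correct. Note, however, that in the paper this statement is recorded as a \emph{Fact} and is not proved at all: the paper simply cites \cite[Theorem~4.2]{SiZ}. So there is no in-paper proof to compare against; you have supplied one where the authors chose to defer to the literature.

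For what it is worth, your route --- lifting to $Z=X\times X^*\times X^*$, writing $F_1\Box_2 F_2$ as the image of $\Phi=\Psi_1+\Psi_2$ under the linear map $L$, and then invoking Attouch--Br\'ezis exact duality on $Z$ --- is precisely the style of argument used in \cite{SiZ} (and in \cite{Zalinescu}); the Simons--Z\u{a}linescu proof is organized slightly differently but rests on the same exact-conjugation principle under the same qualification condition. Your reductions are all sound: the identity $(L\Phi)^*=\Phi^*\circ L^*$ is the elementary image-function conjugation, the computation $L^*(x^*,x)=(x^*,x,x)$ is correct under the reflexive identifications, the domain difference $\dom\Psi_1-\dom\Psi_2=(P_X\dom F_1-P_X\dom F_2)\times X^*\times X^*$ is right, and the product identity $\cone(S\times W)=\cone(S)\times W$ holds because $W=X^*\times X^*$ is a linear subspace (and, under the hypothesis that $\cone S$ is a subspace, necessarily $0\in S$). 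The indicator terms in $\Psi_1^*$ and $\Psi_2^*$ do collapse the full infimal convolution to the one-variable minimization over $y^*$, and exactness of Attouch--Br\'ezis gives the ``$\min$'' rather than ``$\inf$''. Your remark on the role of the finiteness hypothesis is also accurate: it ensures properness of $F_1\Box_2 F_2$, so that the conjugate on the left is a genuine function and not identically $+\infty$.
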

\begin{proof}
See \cite[Theorem~4.2]{SiZ}.
\end{proof}

\begin{lemma}\label{Co:1}
Let $A, B\colon X\To X^*$ be  maximal monotone, and suppose that
$\cone\big(\dom A-\dom B\big)$ is a closed subspace of $X$.
Then
\begin{equation*}\cone\big(P_{X}\dom F_A-P_{X}\dom F_B\big)
=\cone\big(\dom A-\dom B\big)=
\cone\big(P_{X}\dom F^{*\intercal}_A-P_{X}\dom F^{*\intercal}_B\big).
\end{equation*}
\end{lemma}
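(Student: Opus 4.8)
The goal is to show the chain of equalities
\begin{equation*}
\cone\big(P_{X}\dom F_A-P_{X}\dom F_B\big)
=\cone\big(\dom A-\dom B\big)=
\cone\big(P_{X}\dom F^{*\intercal}_A-P_{X}\dom F^{*\intercal}_B\big).
\end{equation*}
The plan is to exploit the sandwiching relations from Fact~\ref{F5}\ref{F5i}, which for a maximal monotone operator $A$ read
$\dom A\subset P_{X}(\dom F_A^{*\intercal})\subset P_X(\dom F_A)\subset \overline{\dom A}$,
and the identical chain for $B$. The key observation is that all four relevant projected domains for $A$ lie in the ``sandwich'' between $\dom A$ and $\overline{\dom A}$, and likewise for $B$; so the difference sets $P_X\dom F_A - P_X\dom F_B$ and $P_X\dom F_A^{*\intercal}-P_X\dom F_B^{*\intercal}$ are both squeezed between $\dom A-\dom B$ and $\overline{\dom A}-\overline{\dom B}$. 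Taking cones should then collapse everything to a single set, \emph{provided} the hypothesis that $\cone(\dom A-\dom B)$ is a closed subspace forces $\cone(\overline{\dom A}-\overline{\dom B})$ to coincide with it.

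Concretely, I would first record from Fact~\ref{F5}\ref{F5i} the set inclusions
$\dom A\subset P_X\dom F_A^{*\intercal}\subset P_X\dom F_A\subset\overline{\dom A}$
and the same for $B$. Subtracting the $B$-chain from the $A$-chain (being careful that set subtraction is monotone in each argument separately) gives
$\dom A-\overline{\dom B}\subset P_X\dom F_A^{*\intercal}-P_X\dom F_B^{*\intercal}\subset P_X\dom F_A - P_X\dom F_B\subset\overline{\dom A}-\dom B$,
and more usefully the common enclosure
$\dom A-\dom B\subset (\text{each difference set})\subset\overline{\dom A}-\overline{\dom B}$.
Applying $\cone$, which is inclusion-preserving, reduces the whole lemma to proving the single sandwich identity
\begin{equation*}
\cone(\dom A-\dom B)=\cone(\overline{\dom A}-\overline{\dom B}),
\end{equation*}
since every intermediate cone is trapped between these two.

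The main obstacle is therefore this last equality, and it is where the closed-subspace hypothesis must be used. The inclusion $\subset$ is immediate from monotonicity of $\cone$ under $S\subset\overline{S}$. For the reverse inclusion, let $L:=\cone(\dom A-\dom B)$, which by hypothesis is a closed linear subspace of $X$. Since $\dom A-\dom B\subset L$ and $L$ is closed, we get $\overline{\dom A}-\overline{\dom B}\subset\overline{\dom A-\dom B}\subset\overline{L}=L$, where the first containment uses that $\overline{S}-\overline{T}\subset\overline{S-T}$ (a standard fact, since subtraction is continuous). Hence $\cone(\overline{\dom A}-\overline{\dom B})\subset\cone L=L=\cone(\dom A-\dom B)$, the last step because $L$ is a subspace so $\cone L=L$. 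This establishes the required equality and, combined with the squeezing above, completes the proof. The only subtlety to watch is the elementary set-arithmetic fact $\overline{S}-\overline{T}\subset\overline{S-T}$, which follows by approximating points of $\overline{S}$ and $\overline{T}$ by sequences in $S$ and $T$ and using continuity of the map $(s,t)\mapsto s-t$.
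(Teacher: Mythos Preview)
Your argument is correct and mirrors the paper's proof almost exactly: both use Fact~\ref{F5}\ref{F5i} to sandwich the projected Fitzpatrick domains between $\dom A$ and $\overline{\dom A}$ (and likewise for $B$), then reduce to showing $\cone(\overline{\dom A}-\overline{\dom B})\subset\cone(\dom A-\dom B)$ via $\overline{S}-\overline{T}\subset\overline{S-T}$ and the closed-subspace hypothesis. One cosmetic slip: the first displayed chain $\dom A-\overline{\dom B}\subset\cdots\subset\overline{\dom A}-\dom B$ does not follow as written (Minkowski subtraction is monotone \emph{increasing} in both arguments, so mixing the endpoints that way is unjustified), but you immediately supply the correct enclosure $\dom A-\dom B\subset(\text{each difference set})\subset\overline{\dom A}-\overline{\dom B}$, which is all that is needed.
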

\begin{proof}
Using Fact~\ref{F5}\ref{F5i}, we see that
\begin{equation}
\cone\big(\dom A-\dom B\big)\subset
\cone\big(P_{X}\dom F_A-P_{X}\dom F_B\big)
\subset\cone\big(\overline{\dom A}-\overline{\dom B}\big).
\label{Cor:1}
\end{equation}
On the other hand, we have
\begin{equation}
(\forall \lambda>0)\quad
\lambda \big(\overline{\dom A}-\overline{\dom B}\big)
\subset\overline{\lambda \big(\dom A-\dom B\big)}
\subset\overline{\cone\big(\dom A-\dom
B\big)}.\label{Cor:2}
\end{equation}
Thus, by \eqref{Cor:2} and the hypothesis,
\begin{equation}\cone\big(\overline{\dom
A}-\overline{\dom B}\big)
\subset\overline{\cone\big(\dom A-\dom B\big)}
=\cone\big(\dom A-\dom B\big).\label{Cor:3}
\end{equation}
Hence, by \eqref{Cor:1} and \eqref{Cor:3},
$\cone\big(P_{X}\dom F_A-P_{X}\dom F_B\big)
=\cone\big(\dom A-\dom B\big)$.
In a similar fashion,
Fact~\ref{F5}\ref{F5i} implies that
$\cone\big(P_{X}\dom F^{*\intercal}_A-P_{X}\dom
F^{*\intercal}_B\big)=
\cone\big(\dom A-\dom B\big)$.
\end{proof}

\begin{proposition}\label{F12}
Let $A,B\colon X\To X^*$  be maximal monotone and
suppose that $\cone\big(\dom A-\dom B\big)$ is a closed subspace of $X$.
Then
$ F_A\Box_2F_B$ is proper, lower semicontinuous, and convex,
and the partial infimal convolution is exact everywhere.
\end{proposition}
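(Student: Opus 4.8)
The plan is to derive all four assertions simultaneously from Fact~\ref{F4}, applied not to $F_A,F_B$ themselves but to the conjugate-transposes $G_1:=F_A^{*\intercal}$ and $G_2:=F_B^{*\intercal}$. The reason for this device is that the right-hand side of Fact~\ref{F4} written for $G_1,G_2$ will collapse, after using the identity $(F_A^{*\intercal})^*=F_A^\intercal$, to exactly $F_A\Box_2F_B$ with the infimum attained; so the ``$\min$'' furnished by Fact~\ref{F4} is precisely the exactness we want, while the conjugate structure simultaneously delivers lower semicontinuity and convexity.

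First I would verify the hypotheses of Fact~\ref{F4} for $G_1,G_2$. Since $A,B$ are maximal monotone, Fact~\ref{F1} makes $F_A,F_B$ proper lower semicontinuous convex, hence so are $F_A^*,F_B^*$ and their transposes $G_1,G_2$; the same fact gives $G_1=F_A^{*\intercal}\geq F_A\geq\scal{\cdot}{\cdot}$ and similarly $G_2\geq\scal{\cdot}{\cdot}$, so that for every $(x,x^*)$ and every $y^*$,
\[
G_1(x,x^*-y^*)+G_2(x,y^*)\geq \scal{x}{x^*-y^*}+\scal{x}{y^*}=\scal{x}{x^*},
\]
whence $(G_1\Box_2G_2)(x,x^*)\geq\scal{x}{x^*}>-\infty$; this is the first hypothesis. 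For the second, Lemma~\ref{Co:1} identifies $\cone\big(P_X\dom G_1-P_X\dom G_2\big)=\cone\big(P_X\dom F_A^{*\intercal}-P_X\dom F_B^{*\intercal}\big)$ with $\cone(\dom A-\dom B)$, a closed subspace by assumption.

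Then I would apply Fact~\ref{F4} to $G_1,G_2$ and simplify. As $F_A$ is proper lower semicontinuous convex we have $(F_A^{*\intercal})^*=F_A^\intercal$, and likewise for $B$; hence $G_1^*(x^*-y^*,x)=F_A^\intercal(x^*-y^*,x)=F_A(x,x^*-y^*)$ and $G_2^*(y^*,x)=F_B(x,y^*)$. Thus Fact~\ref{F4} reads
\[
(G_1\Box_2G_2)^*(x^*,x)=\min_{y^*\in X^*}\big(F_A(x,x^*-y^*)+F_B(x,y^*)\big)=(F_A\Box_2F_B)(x,x^*),
\]
with the minimum attained. The attainment is exactly the exactness of the partial inf-convolution $F_A\Box_2F_B$. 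Moreover $(G_1\Box_2G_2)^*$ is a Fenchel conjugate, hence lower semicontinuous and convex, and transposition preserves these properties; since the displayed identity reads $F_A\Box_2F_B=(G_1\Box_2G_2)^{*\intercal}$, the function $F_A\Box_2F_B$ is lower semicontinuous and convex.

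Finally I would settle properness. As $\cone(\dom A-\dom B)$ is a subspace it contains $0$, so $0\in\dom A-\dom B$ and there is a common point $x_0\in\dom A\cap\dom B$. By Fact~\ref{F5}\ref{F5i}, $x_0\in P_X\dom F_A$ and $x_0\in P_X\dom F_B$, so $F_A(x_0,a^*)<\infty$ and $F_B(x_0,b^*)<\infty$ for suitable $a^*,b^*$, giving $(F_A\Box_2F_B)(x_0,a^*+b^*)\leq F_A(x_0,a^*)+F_B(x_0,b^*)<\infty$; together with the bound $F_A\Box_2F_B\geq\scal{\cdot}{\cdot}>-\infty$ recorded above, this shows $F_A\Box_2F_B$ is proper. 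I expect the main obstacle to be the transpose/conjugate bookkeeping: the crucial insight is that feeding the conjugate-transposes $G_1,G_2$ into Fact~\ref{F4} is what converts the conjugate-level attained minimum into exactness of the primal object $F_A\Box_2F_B$, with the identity $(F_A^{*\intercal})^*=F_A^\intercal$ serving as the pivot.
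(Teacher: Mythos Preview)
Your argument is correct. It shares with the paper the decisive step---invoking Fact~\ref{F4} for the pair $G_1=F_A^{*\intercal}$, $G_2=F_B^{*\intercal}$ after checking its hypotheses via Fact~\ref{F1} and Lemma~\ref{Co:1}---but it packages the conclusion more directly. The paper applies Fact~\ref{F4} \emph{twice}: first to $F_A,F_B$ to obtain $(F_A\Box_2F_B)^{*\intercal}=F_A^{*\intercal}\Box_2F_B^{*\intercal}$, and then to $F_A^{*\intercal},F_B^{*\intercal}$ to conclude $(F_A\Box_2F_B)^{**}=F_A\Box_2F_B$ with attained infimum, reading off properness, lower semicontinuity, convexity, and exactness from the biconjugate identity. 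You bypass the first application entirely: a single call to Fact~\ref{F4} (for $G_1,G_2$) already exhibits $F_A\Box_2F_B$ as $(G_1\Box_2G_2)^{*\intercal}$ with the minimum attained, and a conjugate is automatically convex and lower semicontinuous. Your separate properness argument via $\dom A\cap\dom B\neq\varnothing$ is also fine (note that the lower bound $F_A\Box_2F_B\geq\scal{\cdot}{\cdot}$ you invoke at the end follows by the same one-line computation you gave for $G_1\Box_2G_2$, since $F_A,F_B\geq\scal{\cdot}{\cdot}$ as well). The paper's route has the minor side benefit of also recording the identity $(F_A\Box_2F_B)^{*}=F_A^{*}\Box_2F_B^{*}$ (with attainment), which is reused in Theorem~\ref{FS6}; your route is leaner for the proposition at hand.
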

\begin{proof}
Take $(x,x^*)\in X\times X^*$.
By Fact~\ref{F1},
$(F_A\Box_2F_B)(x,x^*)\geq\langle x,x^*\rangle>-\infty$.
Lemma~\ref{Co:1} implies that
$$\cone\big(P_X \dom F_A-P_X\dom F_B\big)=
\cone\big(\dom A-\dom B\big)\;\; \text{is a closed subspace}.$$
Using Fact~\ref{F4}, we see that
\begin{equation}
\big(F_A\Box_2F_B\big)^{*\intercal}(x,x^*)
=\min_{y^*\in X^*}F_A^*(x^*-y^*,x)+F_B^*(y^*,x)=
\big(F^{*\intercal}_A\Box_2F^{*\intercal}_B\big) (x,x^*).
\label{Ap:3}
\end{equation}
By Fact~\ref{F1},
$$ \big(F^{*\intercal}_A\Box_2F^{*\intercal}_B\big)(x,x^*)
\geq\langle x,x^*\rangle>-\infty.$$
In view of Lemma~\ref{Co:1},
$$\cone\big(P_{X} \dom F^{*\intercal}_A-P_X\dom F^{*\intercal}_B\big)
=\cone\big(\dom A-\dom B\big)\;\; \text{is a closed subspace}.$$
Therefore, using Fact~\ref{F4} and \eqref{Ap:3},
\begin{align*}
\big(F_A\Box_2F_B\big)^{**}(x,x^*)
&=\big(F_A\Box_2F_B\big)^{*\intercal*}(x^*,x)=\min_{y^*\in X^*}{F^{*\intercal*}_A(x^*-y^*,x)+F_B^{*\intercal*}(y^*,x)}\\
&=\min_{y^*\in X^*}{F_A(x,x^*-y^*)+F_B(x,y^*)}\\
&=\big(F_A\Box_2F_B\big)(x,x^*).
\end{align*}
Hence $ F_A\Box_2F_B$ is proper, lower semicontinuous, and convex,
and the partial infimal convolution is exact.
\end{proof}

We are now ready for the main result of this section.

\begin{theorem}[Fitzpatrick function of the sum]\label{FS6}
Let $A,B\colon X\To X^*$ be maximal monotone linear relations,
and suppose that $\dom A-\dom B$ is closed.
Then $F_{A+B}= F_A\Box_2F_B$.
\end{theorem}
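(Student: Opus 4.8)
The plan is to prove the equality by showing that $F_{A+B}$ and $G := F_A\Box_2 F_B$ have the same Fenchel conjugate, and that both are proper, lower semicontinuous, and convex; biconjugation then forces them to coincide. This route delivers equality directly and bypasses establishing the two inequalities separately.

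First I would record that, since $A$ and $B$ are linear relations, $\dom A$ and $\dom B$ are linear subspaces of $X$; hence $\dom A-\dom B$ is a subspace, and the closedness hypothesis gives that $\cone\big(\dom A-\dom B\big)=\dom A-\dom B$ is a closed subspace. Thus the standing hypothesis of Proposition~\ref{F12} is met, so $G$ is proper, lsc, and convex, and—crucially—equation~\eqref{Ap:3} yields $G^{*\intercal}=F_A^{*\intercal}\Box_2 F_B^{*\intercal}$. Simultaneously, Fact~\ref{F5}\ref{F5ii} shows that $A+B$ is maximal monotone; it is plainly a linear relation and, being maximal monotone, has closed graph, so both $F_{A+B}$ (Fact~\ref{F1}) and the explicit formula for $F_{A+B}^*$ (Proposition~\ref{Faz:1}) are available.

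The computational heart is to evaluate $G^{*\intercal}$. Using Proposition~\ref{Faz:1} for $A$ and $B$, I would rewrite $F_A^{*\intercal}=\iota_{\gra A}+\scal{\cdot}{\cdot}$ and $F_B^{*\intercal}=\iota_{\gra B}+\scal{\cdot}{\cdot}$ on $X\times X^*$, then substitute into the partial inf-convolution: evaluated at $(x,x^*)$, the linear terms collapse to a single $\scal{x}{x^*}$, and the indicator terms force the existence of a splitting $x^*=a^*+b^*$ with $a^*\in Ax$ and $b^*\in Bx$. This produces $G^{*\intercal}=\iota_{\gra(A+B)}+\scal{\cdot}{\cdot}$. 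Applying Proposition~\ref{Faz:1} to the closed-graph maximal monotone linear relation $A+B$ gives exactly $F_{A+B}^{*\intercal}=\iota_{\gra(A+B)}+\scal{\cdot}{\cdot}$, whence $G^{*\intercal}=F_{A+B}^{*\intercal}$, i.e. $G^*=F_{A+B}^*$.

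Finally, taking conjugates again and invoking that $G$ and $F_{A+B}$ are proper lsc convex (Proposition~\ref{F12} and Fact~\ref{F1}), I conclude $G=G^{**}=F_{A+B}^{**}=F_{A+B}$, as desired. The main obstacle is the middle step: one must verify that $\gra(A+B)$ is closed so that Proposition~\ref{Faz:1} applies to $A+B$, and carefully justify the collapse of the partial inf-convolution of the two indicator-plus-pairing functions to $\iota_{\gra(A+B)}+\scal{\cdot}{\cdot}$, which rests on the identification of $\menge{x^*}{\exists\,y^*,\ x^*-y^*\in Ax\text{ and }y^*\in Bx}$ with $(A+B)x$.
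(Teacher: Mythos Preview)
Your proposal is correct and follows essentially the same route as the paper: verify the Simons--Z\u{a}linescu constraint qualification via Lemma~\ref{Co:1}, compute $(F_A\Box_2 F_B)^*$ using Fact~\ref{F4} together with the explicit formula $F_A^*=\iota_{\gra A^{-1}}+\scal{\cdot}{\cdot}$ from Proposition~\ref{Faz:1}, collapse the result to $\iota_{\gra(A+B)^{-1}}+\scal{\cdot}{\cdot}=F_{A+B}^*$, and then biconjugate using Proposition~\ref{F12} and Fact~\ref{F1}. The only difference is cosmetic (you phrase things via $G^{*\intercal}$ and cite \eqref{Ap:3} directly, whereas the paper reapplies Fact~\ref{F4}); you are also slightly more explicit than the paper in invoking Fact~\ref{F5}\ref{F5ii} to guarantee that $\gra(A+B)$ is closed before applying Proposition~\ref{Faz:1} to $A+B$.
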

\begin{proof}
Lemma~\ref{Co:1} implies that
$$\cone\big(P_X \dom F_A-P_X\dom F_B\big)=
\cone\big(\dom A-\dom B\big)= \dom A-\dom B\;\;
\text{is a closed subspace}.$$
Take $(x,x^*)\in X\times X^*$.
Then, by Fact~\ref{F1},
$(F_A\Box_2F_B)(x,x^*)\geq\langle x,x^*\rangle>-\infty$.
Using Fact~\ref{F4} and Proposition~\ref{Faz:1}, we deduce that
\begin{align*}(F_A\Box_2F_B)^*(x^*,x)&=\min_{y^*\in X^*}F_A^*(x^*-y^*,x)+F_B^*(y^*,x)\\
&=\min_{y^*\in X^*}\iota_{\gra A}(x,x^*-y^*)+\langle x^*-y^*,x\rangle+\iota_{\gra B}(x,y^*)+\langle y^*,x\rangle\\
&=\iota_{\gra (A+B)}(x,x^*)+\langle x^*,x\rangle=F_{A+B}^*(x^*,x).
\end{align*}
Taking Fenchel conjugates and applying Proposition~\ref{F12}
now yields the result.
\end{proof}

\section{Maximal Monotonicity}

\label{s:6}

In this section, we shall obtain criteria for maximal monotonicity of
linear relations.
These criteria generalize some of
the results by Phelps and Simons \cite{PheSim} (or \cite[Theorem~47.1]{Si}) which
also form the base of our proofs.
The following concept of the halo (see \cite[Definition~2.2]{PheSim})
is very useful.

\begin{definition}
Let $A\colon X \To X^*$ be a monotone linear relation.
A vector $x\in X$ belongs to the \emph{halo} of $A$,
written
\begin{equation} \label{H:1}
x\in \halo A \quad \Leftrightarrow \quad
\big(\exi M\geq 0\big)\big(\forall (y,y^*)\in \gra A\big)\;\;
\langle y^*, x-y\rangle\leq M\|x-y\|.
\end{equation}
\end{definition}

\begin{proposition}\label{linear}
Let $A\colon X\To X^*$ be a monotone linear relation. Then
$\dom A \subset \halo A \subset (A0)^{\perp}$.
\end{proposition}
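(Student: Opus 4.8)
The plan is to prove the two inclusions $\dom A \subset \halo A$ and $\halo A \subset (A0)^{\perp}$ separately, each working directly from the definition of the halo in \eqref{H:1}.

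First I would handle $\dom A \subset \halo A$. Fix $x \in \dom A$ and choose $x^* \in Ax$, so that $(x,x^*) \in \gra A$. For an arbitrary $(y,y^*) \in \gra A$, monotonicity of $A$ gives $\langle x-y, x^*-y^*\rangle \geq 0$, that is, $\langle y^*, x-y\rangle \leq \langle x^*, x-y\rangle$. Bounding the right-hand side by $\langle x^*, x-y\rangle \leq \|x^*\|\,\|x-y\|$ shows that $M := \|x^*\|$ witnesses $x \in \halo A$. This step is immediate and uses only monotonicity together with the duality bound.

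The second inclusion $\halo A \subset (A0)^{\perp}$ is where the linear structure enters. Fix $x \in \halo A$ with associated constant $M \geq 0$, and let $z \in A0$ be arbitrary. Since $A0$ is a linear subspace (Fact~\ref{Rea:1}\ref{Th:26}), we have $(0,\lambda z) \in \gra A$ for every $\lambda \in \RR$. Substituting $(y,y^*) = (0,\lambda z)$ into \eqref{H:1} yields $\lambda\langle z, x\rangle \leq M\|x\|$ for all $\lambda \in \RR$. Letting $\lambda \to +\infty$ and $\lambda \to -\infty$ forces $\langle z, x\rangle = 0$; since $z \in A0$ was arbitrary, $x \in (A0)^{\perp}$.

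The argument is short, and the only point requiring care — the main, if mild, obstacle — is the second inclusion: one must recognize that the test points in \eqref{H:1} should be taken along the whole line $\{(0,\lambda z) : \lambda \in \RR\}$, so that the sign-freedom of $\lambda$ collapses $\langle z, x\rangle$ to zero. The linearity of $A0$ supplied by Fact~\ref{Rea:1}\ref{Th:26} is precisely what legitimizes this choice of test points.
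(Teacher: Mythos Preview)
Your proof is correct and follows essentially the same approach as the paper: the left inclusion is obtained directly from monotonicity (with $M=\|x^*\|$), and the right inclusion is exactly what the paper means by ``taking $y=0$ in \eqref{H:1}'', since $(0,\lambda z)\in\gra A$ for every $\lambda\in\RR$ forces $\langle z,x\rangle=0$. You have simply made explicit the steps that the paper leaves to the reader.
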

\begin{proof}
The left inclusion follows from the monotonicity of $A$,
while the right inclusion is seen to be true by taking
$y=0$ in \eqref{H:1}.
\end{proof}

The next two results generalize Phelps and Simons'
\cite[Lemma~2.3 and Theorem~2.5]{PheSim}; we follow  their proofs.

\begin{proposition}\label{rH:4}
Let $A\colon X \rightrightarrows X^*$ be a monotone linear relation.
Then
\begin{equation*}
\halo A = P_X \bigg( \bigcup_{\text{$B$ is a monotone extension of $A$}}
\gra B\bigg).
\end{equation*}
\end{proposition}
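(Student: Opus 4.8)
The plan is to prove the set equality by reducing it to the pointwise statement that $x$ lies in the right-hand side if and only if there is some $x^*\in X^*$ with $(x,x^*)$ monotonically related to $\gra A$. One inclusion of this reduction is routine: if $(x,x^*)$ is monotonically related to $\gra A$, then $\gra A\cup\{(x,x^*)\}$ is a monotone set, since the only new pairings to verify involve $(x,x^*)$ and these hold by hypothesis together with $\scal{x-x}{x^*-x^*}=0$; hence it is the graph of a monotone extension $B$ of $A$ with $x\in\dom B=P_X\gra B$. Conversely, if $x\in P_X\gra B$ for a monotone extension $B\supset A$, choose $x^*\in Bx$: since $\gra A\subset\gra B$ and $B$ is monotone, $(x,x^*)$ is monotonically related to $\gra A$. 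Thus it remains to establish
\[
x\in\halo A \quad\Longleftrightarrow\quad (\exi x^*\in X^*)(\forall (y,y^*)\in\gra A)\ \ \scal{x-y}{x^*-y^*}\geq 0.
\]

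The implication ``$\Leftarrow$'' is immediate: if $(x,x^*)$ is monotonically related to $\gra A$, then for every $(y,y^*)\in\gra A$ we have $\scal{x-y}{y^*}\leq\scal{x-y}{x^*}\leq\|x^*\|\,\|x-y\|$, so $x\in\halo A$ with $M=\|x^*\|$.

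For ``$\Rightarrow$'', fix $x\in\halo A$ with constant $M\geq 0$. By Proposition~\ref{linear}, $x\in(A0)^\bot$, and by Proposition~\ref{P:a}\ref{Th:27} every $y\in\dom A$ satisfies $y\in(A0)^\bot$; hence $x-y\in(A0)^\bot$ and, by Proposition~\ref{P:a}\ref{Th:32}, the quantity $g(x-y):=\scal{x-y}{y^*}$ is independent of the choice of $y^*\in Ay$. Writing $g(x-y)=\scal{x}{Ay}-\scal{y}{Ay}=\scal{x}{Ay}-2q_A(y)$ and noting that $y\mapsto\scal{x}{Ay}$ is linear on $\dom A$ (Proposition~\ref{P:a}\ref{Th:32+}, with $z=x$) while $q_A$ is convex (Proposition~\ref{rP:1}), we see that $g$ is concave on the affine set $C:=x-\dom A$; moreover the halo condition reads precisely $g(z)\leq M\|z\|$ for all $z\in C$. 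The target is an $x^*\in X^*$ with $\scal{z}{x^*}\geq g(z)$ for all $z\in C$, which is exactly monotone relatedness of $(x,x^*)$ to $\gra A$. To produce it, extend $g$ to all of $X$ by $g\equiv-\infty$ off $C$; the extension is concave, and $g\leq\Phi:=M\|\cdot\|$ on $X$. Since $\Phi$ is finite, convex, and continuous, the Hahn--Banach sandwich theorem yields a continuous affine functional $a\colon z\mapsto\scal{z}{x^*}+\beta$ with $g\leq a\leq\Phi$ on $X$. Evaluating $a\leq\Phi$ at $0$ gives $\beta\leq 0$, whence $\scal{z}{x^*}\geq a(z)\geq g(z)$ for all $z\in C$, as required.

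I expect the main obstacle to be this final step, namely producing a \emph{continuous} functional $x^*$. The monotone-relatedness inequality by itself is easy to satisfy by a (possibly unbounded) linear functional on $\spand(\{x\}\cup\dom A)$, and the quadratic term $q_A$ prevents $g$ from being affine, so one cannot simply match $g$ by a linear functional. The halo constant $M$ is precisely the ingredient that bounds $g$ above by the continuous sublinear functional $M\|\cdot\|$, and it is through this domination that the sandwich theorem delivers a \emph{bounded} $x^*$ (continuity of $a$ follows from $a\leq\Phi$ with $\Phi$ finite and continuous). Secondary care is needed to confirm that extending $g$ by $-\infty$ preserves concavity and that the constraint qualification for the sandwich theorem is met; both are guaranteed by the finiteness and continuity of $M\|\cdot\|$. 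Combining the reduction, the easy direction, and this construction gives the claimed equality.
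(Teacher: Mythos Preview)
Your proof is correct and follows essentially the same route as the paper: both manufacture the required $x^*$ by separating the concave function $y\mapsto\scal{x-y}{Ay}$ from the convex function $M\|x-\cdot\|$. The paper does this via explicit separation of the corresponding hypograph and epigraph in $X\times\RR$, whereas you invoke the Hahn--Banach sandwich theorem after the change of variables $z=x-y$; the paper also records, in a subsequent remark, a third variant based on a minimax argument over the dual ball $MB^*$.
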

\begin{proof}
``$\Leftarrow$'': Let $(x,x^*)\in X\times X^*$ belong
to some monotone extension of $A$. Then
$$\big(\forall (y,y^*)\in\gra A\big)\quad
\langle y^*,x-y\rangle\leq\langle x^*,x-y\rangle\leq\|x^*\|\|x-y\|.$$
Hence \eqref{H:1} holds with $M=\|x^*\|$.

``$\Rightarrow$'': Take $x\in\halo A$. Then
there exists $M\geq 0$ such that
\begin{equation}
\big(\forall (y,y^*)\in\gra A\big)\quad
\langle y^*, x-y\rangle\leq M\|x-y\|.\label{H:2}
\end{equation}
Now set
\begin{equation*}
C:=\big\{(y,\lambda)\in X\times \RR \mid \lambda\geq M\|x-y\|\big\}
\end{equation*}
and
\begin{equation*}
D:=\big\{(y,\lambda)\in (\dom A)\times \RR \mid \lambda\leq \langle
Ay,x-y\rangle\big\}.
\end{equation*}
(Note that $\langle
Ay,x-y\rangle$ is single-valued by Proposition~\ref{P:a}\ref{Th:32} and Proposition~\ref{linear}.) 
Clearly, $C$ is convex with nonempty interior.
Proposition~\ref{P:a}\ref{Th:32+},
Proposition~\ref{rP:1}, and
Proposition~\ref{linear} imply that $D$ is convex and nonempty.
By \eqref{H:2}, $(\inte C)\cap D=\emp$.
The Separation Theorem guarantees the existence of
$\alpha\in \RR$ and of $(x^*, \mu)\in X\times\RR$
such that $(x^*, \mu)\neq (0,0)$ and
\begin{align}
\big(\forall(y,\lambda)\in C\big) \quad \label{H:5}
&\langle y,x^*\rangle+\lambda \mu\geq\alpha \\
\big(\forall(y,\lambda)\in D\big) \quad \label{H:4}
&\langle y,x^*\rangle+\lambda \mu\leq\alpha.
\end{align}
Since $(\forall \lambda\leq 0)$ $(0,\lambda)\in D$ by
Proposition~\ref{linear}, \eqref{H:4} implies that $\mu\geq 0$.
If $\mu=0$, then \eqref{H:5} yields
$\inf \langle x^*,X\rangle\geq\alpha$, which implies $x^*=0$
and hence $(x^*,\mu)= (0,0)$, a contradiction. Therefore, $\mu>0$.
Dividing the inequalities \eqref{H:5} and \eqref{H:4} by $\mu$ thus
yields
\begin{equation*}
\langle\tfrac{x^*}{\mu},x\rangle\geq\tfrac{\alpha}{\mu}
\quad\text{and}\quad
\big(\forall y\in\dom A\big)\;\;
\langle\tfrac{x^*}{\mu},y\rangle+ \langle Ay,x-y\rangle
\leq\tfrac{\alpha}{\mu}.
\end{equation*}
Therefore,
$$\big(\forall (y,y^*)\in\gra A\big) \quad
\langle\tfrac{x^*}{\mu}-y^*,x-y\rangle \geq 0.$$
Hence $(x,\tfrac{x^*}{\mu})$ is  monotonically related to $\gra A$.
\end{proof}

\begin{remark} It is interesting to note that
Proposition~\ref{rH:4} can also be proved by Simons' $M$-technique.
To see this, take $x\in \halo A$
and denote the dual closed unit ball by $B^*$. Then
$x$ is characterized by
$\inf_{y\in\dom A}M\|x-y\|-\langle Ay,x-y\rangle\geq 0$;
equivalently, by
$$\inf_{y\in \dom A}\max_{b^*\in M {B}^*}\langle b^*,x-y\rangle-\langle Ay,x-y\rangle\geq 0.$$
As the function $(y, b^*)\mapsto \langle b^*,x-y\rangle-\langle Ay,x-y\rangle$ is convex in $y$, and
concave and upper semicontinuous in $b^*$, the Minimax Theorem
\cite[Theorem~3.2]{Si} results in
$$\max_{b^*\in M{B}^*}\inf_{y\in\dom A}\langle b^*,x-y\rangle-\langle Ay,x-y\rangle=\inf_{y\in \dom A}\max_{b^*\in M {B}^*}\langle b^*,x-y\rangle-\langle Ay,x-y\rangle\geq 0.$$
Hence there exists $c^*\in M{B}^*$ such that
$$\inf_{y\in\dom A}\langle c^*,x-y\rangle-\langle Ay,x-y\rangle\geq 0.$$
Therefore, $(x,c^*)$ is monotonically related to $\gra A$.
\end{remark}

\begin{theorem}[maximality]\label{Siam:1}
Let $A\colon X \To X^*$ be a monotone linear relation.
Then $$\text{$A$ is maximal monotone}\quad
\Leftrightarrow \quad (\dom A)^{\bot}= A0\;\text{and}\;\halo A =\dom A.$$
\end{theorem}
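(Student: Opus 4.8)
The plan is to prove the two implications separately, drawing on the structural results already assembled, since once those are in hand each direction is short.

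For the forward implication, suppose $A$ is maximal monotone. The equality $(\dom A)^{\bot}=A0$ is precisely Proposition~\ref{j22}, so nothing new is needed there. For $\halo A=\dom A$, I would invoke Proposition~\ref{rH:4}: a maximal monotone operator admits no proper monotone extension, so the only monotone extension $B$ of $A$ is $A$ itself, whence the union over all monotone extensions collapses to $\gra A$ and $\halo A=P_X(\gra A)=\dom A$. (The inclusion $\dom A\subset\halo A$ is in any case guaranteed by Proposition~\ref{linear}, serving as a consistency check.)

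For the reverse implication, assume $(\dom A)^{\bot}=A0$ and $\halo A=\dom A$, and let $(x,x^*)\in X\times X^*$ be monotonically related to $\gra A$; the goal is to show $(x,x^*)\in\gra A$, which is exactly maximality. First I would observe that monotonic relatedness forces $x\in\halo A$: for every $(y,y^*)\in\gra A$ one has $\langle y^*,x-y\rangle\leq\langle x^*,x-y\rangle\leq\|x^*\|\,\|x-y\|$, which is the halo estimate \eqref{H:1} with $M=\|x^*\|$. By hypothesis $\halo A=\dom A$, so $x\in\dom A$, and I may pick $x_0^*\in Ax$. Then Proposition~\ref{P:a}\ref{Th:31+} yields $x^*-x_0^*\in(\dom A)^{\bot}$. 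Using the hypothesis $(\dom A)^{\bot}=A0$ together with the translation formula $Ax=x_0^*+A0$ from Fact~\ref{Rea:1}\ref{Th:28}, I conclude $x^*=x_0^*+(x^*-x_0^*)\in x_0^*+A0=Ax$, i.e.\ $(x,x^*)\in\gra A$. Since every pair monotonically related to $\gra A$ already lies in $\gra A$, $A$ is maximal monotone.

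There is no serious obstacle remaining: the genuine work was done earlier in the halo characterization Proposition~\ref{rH:4} (via the Separation Theorem) and in Proposition~\ref{P:a}\ref{Th:31+}, so the present argument is mainly assembly. The one point worth stating carefully is the complementary role of the two hypotheses in the reverse direction: $\halo A=\dom A$ is what promotes a monotonically related point into $\dom A$ and thereby captures the primal coordinate, while $(\dom A)^{\bot}=A0$ is exactly what is needed to absorb the residual $x^*-x_0^*$ into $A0$ and upgrade this to membership of the full pair $(x,x^*)$ in $\gra A$. Neither hypothesis alone suffices, which is why both appear in the characterization.
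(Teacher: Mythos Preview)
Your proof is correct and follows essentially the same approach as the paper: both directions use Proposition~\ref{j22}, Proposition~\ref{rH:4}, Proposition~\ref{P:a}\ref{Th:31+}, and Fact~\ref{Rea:1}\ref{Th:28} in the same way. The only cosmetic differences are that in ``$\Rightarrow$'' you phrase the use of Proposition~\ref{rH:4} via ``no proper extension'' rather than picking $x\in\halo A$ and extending, and in ``$\Leftarrow$'' you inline the halo estimate with $M=\|x^*\|$ instead of citing Proposition~\ref{rH:4} for $x\in\halo A$; both variants are equivalent.
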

\begin{proof}
``$\Rightarrow$'':
By Proposition~\ref{j22}, $(\dom A)^{\bot}=A0$.
Proposition~\ref{rH:4} yields $\dom A\subset \halo A$.
Now take $x\in \halo A$.
By Proposition~\ref{rH:4}, there exists $x^*$ such that $(x,x^*)$
is monotonically related to $\gra A$. Since $A$ is maximal monotone,
$(x,x^*)\in\gra A$, so $x\in \dom A$. Thus, $\halo A =\dom A$.

``$\Leftarrow$'': Suppose  $(x,x^*)\in X\times X^*$ is
monotonically related to $A$.
By Proposition~\ref{rH:4}, $x\in \halo A$.
Thus $x\in \dom A$ and we pick $x_{0}^*\in Ax$.
By Proposition~\ref{P:a}\ref{Th:31+}
and Fact~\ref{Rea:1}\ref{Th:28}, we have $x^*\in x_{0}^{*}+(\dom A)^{\perp}
=x_{0}^*+A0= Ax$. Therefore, $A$ is maximal monotone.
\end{proof}

\begin{corollary}
Let $A\colon X \To X^*$ be a monotone linear relation, and suppose
that $\dom A$ is closed. Then
$A$ is maximal monotone $\Leftrightarrow$
$(\dom A)^{\bot}= A0$.
\end{corollary}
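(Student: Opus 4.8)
The plan is to invoke the maximality criterion of Theorem~\ref{Siam:1}, which reduces maximal monotonicity of a monotone linear relation to the conjunction of the two conditions $(\dom A)^\bot = A0$ and $\halo A = \dom A$. The forward implication of the corollary is then immediate, since its asserted condition is literally the first of these two. All the content lies in the reverse implication: under the closedness hypothesis on $\dom A$, the second condition $\halo A = \dom A$ should follow automatically from the first, so that no separate verification is needed.

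First I would record the elementary but essential structural fact that $\dom A$ is a linear subspace of $X$: it is the image $P_X(\gra A)$ of the subspace $\gra A$ under the continuous linear projection $P_X$, hence a subspace, and by hypothesis a \emph{closed} one. Next, assuming $(\dom A)^\bot = A0$, I would take annihilators of both sides. Since $X$ is reflexive, the bipolar theorem for subspaces applies and yields
\begin{equation*}
(A0)^\bot = \big((\dom A)^\bot\big)^\bot = \overline{\dom A} = \dom A,
\end{equation*}
the last equality using closedness.

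With this identity in hand, the inclusion chain of Proposition~\ref{linear}, namely $\dom A \subset \halo A \subset (A0)^\bot$, collapses: its two outer terms now both equal $\dom A$, forcing $\halo A = \dom A$. Thus both hypotheses of Theorem~\ref{Siam:1} hold, and maximal monotonicity of $A$ follows. I do not expect a genuine obstacle here; the only step requiring a moment of care is the bipolar identity $\big((\dom A)^\bot\big)^\bot = \overline{\dom A}$, which relies on $\dom A$ being a subspace (so that its second annihilator is its closure rather than merely the closure of its span) and on reflexivity to identify $X^{**}$ with $X$ so that the outer annihilator lands back in $X$.
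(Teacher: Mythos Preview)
Your proposal is correct and follows essentially the same route as the paper: both directions go through Theorem~\ref{Siam:1}, and for ``$\Leftarrow$'' you use closedness of $\dom A$ together with $(\dom A)^\bot = A0$ to obtain $\dom A = (A0)^\bot$, then collapse the inclusion chain of Proposition~\ref{linear} to get $\halo A = \dom A$. The only difference is that you spell out the bipolar step in more detail than the paper does.
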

\begin{proof}
``$\Rightarrow$'': Apply Theorem~\ref{Siam:1}.
``$\Leftarrow$'': Since $\dom A$ is closed, the hypothesis yields
$\dom A= (A0)^{\perp}$. By Proposition~\ref{linear},
$\dom A=\halo A$. Once again, apply Theorem~\ref{Siam:1}.
\end{proof}

\begin{corollary}
Let $A\colon \RR^n\To\RR^n$ be a monotone linear relation.
Then $A$ is maximal monotone $\Leftrightarrow$ $(\dom A)^\bot = A0$.
\end{corollary}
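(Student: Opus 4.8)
The plan is to reduce the claim to the preceding corollary (the one assuming $\dom A$ closed) by exploiting the fact that, in finite dimensions, closedness of the domain is automatic. So essentially no new work is needed beyond a dimension-theoretic observation.

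First I would note that since $A$ is a linear relation, its graph $\gra A$ is a linear subspace of $\RR^n\times\RR^n$. Consequently $\dom A = P_X(\gra A)$ is a linear subspace of $\RR^n$, being the image of a subspace under the linear projection $P_X$. Because every linear subspace of a finite-dimensional normed space is closed, $\dom A$ is closed.

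With closedness of $\dom A$ established, the equivalence follows at once from the preceding corollary: $A$ is maximal monotone if and only if $(\dom A)^{\bot} = A0$. Thus the finite-dimensional statement is simply the special case of the closed-domain corollary in which the closedness hypothesis holds for free.

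The main (and essentially only) point worth flagging is the verification that $\dom A$ is closed; once this is in place there is nothing further to prove. If one preferred to avoid citing the preceding corollary, one could argue directly from Theorem~\ref{Siam:1}: closedness of $\dom A$ together with $(\dom A)^{\bot}=A0$ forces $\dom A=(A0)^{\perp}$, whence Proposition~\ref{linear} yields $\dom A=\halo A$, which is precisely the remaining hypothesis of Theorem~\ref{Siam:1}.
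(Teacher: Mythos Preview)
Your proposal is correct and matches the paper's intended argument: the $\RR^n$ corollary is stated without proof precisely because it is the special case of the preceding corollary, the closedness hypothesis being automatic since $\dom A$ is a linear subspace of a finite-dimensional space. Your optional direct route via Theorem~\ref{Siam:1} is also fine and in fact just unwinds the proof of that preceding corollary.
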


\section{Characterization of Skew Monotone Linear Relations}
\label{s:7}

As an application of Theorem~\ref{Siam:1}, we shall
characterize skew linear relations.
Theorem~\ref{GF:4} below extends
\cite[Theorem~2.9]{BBW}
from monotone linear operators
to monotone linear relations.

\begin{definition}[skew linear relation]
Let  $A\colon X \To X^*$ be  a linear relation.
We say that $A$ is \emph{skew} if $A^*=-A$.
\end{definition}

\begin{proposition}
Let $A\colon X\To X^*$ be a skew linear relation.
Then both $A$ and $A^*$ are maximal monotone.
\end{proposition}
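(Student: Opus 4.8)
The plan is to turn the algebraic relation $A^{*}=-A$ into structural information about $\gra A$ by reading it through the orthogonal-complement identity of Proposition~\ref{explicit}\ref{expliciti}. That identity, $\gra(-A^{*})=(\gra A^{-1})^{\perp}$, is purely formal (its proof uses only the definitions of adjoint, inverse, and $\perp$, not monotonicity), so it is available here; combined with $A^{*}=-A$ it gives $\gra A=\gra(-A^{*})=(\gra A^{-1})^{\perp}$. First I would record the two payoffs of this. On one hand, $(\gra A^{-1})^{\perp}$ is an intersection of kernels of continuous linear functionals, hence closed, so $\gra A$ is automatically closed. On the other hand, membership $(x,x^{*})\in(\gra A^{-1})^{\perp}$ unwinds to $\langle x^{*},a\rangle+\langle a^{*},x\rangle=0$ for all $(a,a^{*})\in\gra A$.

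Next I would show that the pairing vanishes on $\gra A$. Fixing $(x,x^{*})\in\gra A=(\gra A^{-1})^{\perp}$ and feeding $(a,a^{*})=(x,x^{*})$ into the previous identity gives $2\langle x,x^{*}\rangle=0$, i.e.\ $\langle x,x^{*}\rangle=0$. Since $A$ is a linear relation, $\gra A$ is a subspace, so for any $(x,x^{*}),(y,y^{*})\in\gra A$ the difference $(x-y,x^{*}-y^{*})$ again lies in $\gra A$ and hence $\langle x-y,x^{*}-y^{*}\rangle=0\geq0$; thus $A$ is monotone (equivalently, one may cite Proposition~\ref{P:a}\ref{Th:31}). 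Because $\gra A^{*}=\gra(-A)=\menge{(x,-x^{*})}{(x,x^{*})\in\gra A}$, the same vanishing yields $\langle x,-x^{*}\rangle=0\geq0$ on $\gra A^{*}$, so $A^{*}$ is monotone as well.

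At this point $A$ is a monotone linear relation with closed graph whose adjoint $A^{*}$ is monotone, which is exactly the hypothesis feeding the Br\'ezis--Browder theorem (Fact~\ref{Sv:7}): for such $A$, monotonicity of $A^{*}$ forces maximal monotonicity of both $A$ and $A^{*}$, completing the proof. I expect no genuine obstacle; the only step requiring care is the orthogonality bookkeeping, which must be arranged so that it delivers closedness of $\gra A$ and the identity $\langle x,x^{*}\rangle=0$ simultaneously, after which monotonicity is immediate and maximality is outsourced to Br\'ezis--Browder.
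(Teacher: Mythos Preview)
Your proof is correct and follows essentially the same route as the paper: both establish $\langle x,x^{*}\rangle=0$ on $\gra A$ (the paper via Fact~\ref{Rea:1}\ref{Sia:2b}, you via the orthogonal-complement identity of Proposition~\ref{explicit}\ref{expliciti}), deduce monotonicity of $A$ and of $A^{*}$, and then invoke Br\'ezis--Browder (Fact~\ref{Sv:7}). Your explicit derivation that $\gra A=(\gra A^{-1})^{\perp}$ is closed is a point the paper leaves implicit, so if anything your write-up is slightly more careful on that score.
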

\begin{proof}
By Fact~\ref{Rea:1}\ref{Sia:2b},
$(\forall x\in\dom A)$ $\langle Ax,x\rangle=0$.
Thus, using Proposition~\ref{P:a}\ref{Th:31} and
Fact~\ref{Rea:1}\ref{Sia:2b},  we see that both
$A$ and $A^*$ are monotone.
By Fact~\ref{Sv:7} and Fact~\ref{Rea:1}\ref{Sia:1},
$A$ and $A^*$ are maximal monotone.
\end{proof}

\begin{definition}[Fitzpatrick family]
Let  $A\colon X \To X^*$ be a maximal monotone linear relation.
The associated \emph{Fitzpatrick family}
$\mathcal{F}_A$ consists of all functions $F\colon
X\times X^*\to\RX$ that are lower semicontinuous and convex,
and that satisfy
$F\geq \scal{\cdot}{\cdot} $, and $F=\scal{\cdot}{\cdot}$ on $\gra A$.
\end{definition}

\begin{fact}[Fitzpatrick]\label{GF:2}
Let  $A\colon X \To X^*$ be a maximal monotone linear relation.
Then for every $(x,x^*)\in X\times X^*$,
\begin{equation}
F_A(x,x^*) = \min\menge{F(x,x^*)}{F\in \mathcal{F}_A}
\quad\text{and}\quad
F_A^{*\intercal}(x,x^*) = \max\menge{F(x,x^*)}{F\in \mathcal{F}_A}.
\end{equation}
\end{fact}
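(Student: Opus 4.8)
The plan is to show that $F_A$ is the smallest and $F_A^{*\intercal}$ the largest member of $\mathcal{F}_A$; since both turn out to lie in $\mathcal{F}_A$, the infimum and supremum in the statement are attained, which is what justifies writing $\min$ and $\max$. Recall that $\gra A\neq\emp$ (it contains $(0,0)$), so all objects below are proper.

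First I would check that $F_A\in\mathcal{F}_A$. By Fact~\ref{F1}, $F_A$ is proper, lower semicontinuous, and convex with $F_A\ge\scal{\cdot}{\cdot}$, so only the equality $F_A=\scal{\cdot}{\cdot}$ on $\gra A$ remains. If $(x,x^*)\in\gra A$, then for every $(y,y^*)\in\gra A$ monotonicity gives $\scal{x-y}{x^*-y^*}\ge 0$, which rearranges to $\scal{x}{y^*}+\scal{y}{x^*}-\scal{y}{y^*}\le\scal{x}{x^*}$, whence $F_A(x,x^*)\le\scal{x}{x^*}$; the admissible choice $(y,y^*)=(x,x^*)$ shows the supremum is attained, so $F_A(x,x^*)=\scal{x}{x^*}$. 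Hence $F_A\in\mathcal{F}_A$.

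Next comes the minimality $F_A\le F$ for every $F\in\mathcal{F}_A$, which is the substantive part of the argument. Fix $(x,x^*)$ with $F(x,x^*)<\infty$ (otherwise there is nothing to prove) and $(y,y^*)\in\gra A$. Using convexity of $F$, then $F\ge\scal{\cdot}{\cdot}$ at the convex combination, and finally $F(y,y^*)=\scal{y}{y^*}$, we obtain for $t\in\zo$
\[
\scal{y+t(x-y)}{y^*+t(x^*-y^*)}
\le F\big(y+t(x-y),\,y^*+t(x^*-y^*)\big)
\le (1-t)\scal{y}{y^*}+tF(x,x^*).
\]
Expanding the left-hand side, subtracting $\scal{y}{y^*}$, dividing by $t$, and letting $t\downarrow 0$ yields
\[
\scal{x}{y^*}+\scal{y}{x^*}-\scal{y}{y^*}\le F(x,x^*).
\]
Taking the supremum over $(y,y^*)\in\gra A$ gives $F_A(x,x^*)\le F(x,x^*)$, so $F_A=\min\mathcal{F}_A$.

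Finally, for the maximality I would exploit the explicit form available here. Since $A$ is maximal monotone its graph is closed, so Proposition~\ref{Faz:1} gives $F_A^*(x^*,x)=\iota_{\gra A^{-1}}(x^*,x)+\scal{x}{x^*}$; transposing, $F_A^{*\intercal}=\iota_{\gra A}+\scal{\cdot}{\cdot}$. This function is proper (finite on the nonempty set $\gra A$), lower semicontinuous and convex (being a conjugate-transpose), dominates $\scal{\cdot}{\cdot}$, and agrees with $\scal{\cdot}{\cdot}$ on $\gra A$, so $F_A^{*\intercal}\in\mathcal{F}_A$. Moreover every $F\in\mathcal{F}_A$ satisfies $F\le\iota_{\gra A}+\scal{\cdot}{\cdot}=F_A^{*\intercal}$, since off $\gra A$ the right-hand side is $+\infty$ while on $\gra A$ both sides equal $\scal{\cdot}{\cdot}$. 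Therefore $F_A^{*\intercal}=\max\mathcal{F}_A$. The main obstacle is the minimality step, where the tangent-line passage $t\downarrow 0$ is the only place the two defining properties $F\ge\scal{\cdot}{\cdot}$ and $F=\scal{\cdot}{\cdot}$ on $\gra A$ are combined (it amounts to showing $(y^*,y)$ is a subgradient of $F$ at each $(y,y^*)\in\gra A$). The maximality collapses to a near-triviality here only because Proposition~\ref{Faz:1} pins down $F_A^{*\intercal}$ explicitly for linear relations; for a general maximal monotone operator one would instead show that $\mathcal{F}_A$ is stable under $F\mapsto F^{*\intercal}$ and that this order-reversing involution (legitimate because $X$ is reflexive, so $F^{**}=F$) interchanges the minimal and maximal elements.
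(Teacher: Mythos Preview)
Your proof is correct. The paper, however, does not actually prove this result: it is stated as a \emph{Fact} and simply cites Fitzpatrick's original paper \cite[Theorem~3.10]{FIT}. Your argument for the minimality of $F_A$ is essentially Fitzpatrick's original tangent-line proof, reproduced cleanly. For the maximality of $F_A^{*\intercal}$ you take a shortcut available in the linear-relation setting: Proposition~\ref{Faz:1} gives $F_A^{*\intercal}=\iota_{\gra A}+\scal{\cdot}{\cdot}$ explicitly, after which the inequality $F\le F_A^{*\intercal}$ is immediate. This is shorter than Fitzpatrick's general argument (which, as you correctly sketch at the end, shows that $F\mapsto F^{*\intercal}$ is an order-reversing involution on $\mathcal{F}_A$ for arbitrary maximal monotone $A$), at the cost of only establishing the result in the class actually needed here. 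Either route is fine; yours has the advantage of being self-contained within the paper's framework.
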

\begin{proof}
See \cite[Theorem~3.10]{FIT}.
\end{proof}

\begin{example}\label{GF:3}
Let $A\colon X\To X^*$ be a skew linear relation.
Then
$F_A=F_{A}^{*\intercal}=\iota_{\gra A}$.
\end{example}
\begin{proof}
Since $(\forall x\in\dom A)$ $\langle Ax,x\rangle=0$,
Proposition~\ref{Faz:1} implies that
$F_A^{*\intercal}=\iota_{\gra A}$.
Moreover,
$$F_{A}=\big(F_{A}^{*\intercal}\big)^{*\intercal}=
\big(\iota_{\gra A}\big)^{*\intercal}=
\big(\iota_{\gra A}^{\intercal}\big)^*=
\big(\iota_{\gra A^{-1}}\big)^*=\iota_{(\gra A^{-1})^{\perp}}
=\iota_{\gra (-A^*)}=\iota_{\gra A},$$
by Proposition~\ref{explicit}\ref{expliciti}.
Therefore, $F_A=F_{A}^{*\intercal}=\iota_{\gra A}$.
\end{proof}

We now characterize skew linear relations
in terms of the Fitzpatrick family.
Note that the Fitzpatrick family is in this case
as small as possible, i.e., a singleton.
(For a related discussion concerning subdifferential
operators, see \cite[Section~5]{BBBRW}.)

\begin{theorem}\label{GF:4}
Let $A\colon X\To X^*$ be a maximal monotone linear relation.
Then
\begin{equation*}
\text{$A$ is skew \; $\Leftrightarrow$ \;
$\dom A=\dom A^*$ and $\mathcal{F}_A$ is a singleton,}
\end{equation*}
in which case $\mathcal{F}_A = \{\iota_{\gra A}\}$.
\end{theorem}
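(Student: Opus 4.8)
The plan is to treat the two implications separately; the forward implication is quick and the reverse carries the real content. For ``$\Rightarrow$'', suppose $A$ is skew, so that $A^*=-A$ and hence $\dom A^*=\dom(-A)=\dom A$. To see that $\mathcal{F}_A$ is a singleton I would invoke Example~\ref{GF:3}, which gives $F_A=F_A^{*\intercal}=\iota_{\gra A}$ when $A$ is skew. Since every $F\in\mathcal{F}_A$ satisfies $F_A\le F\le F_A^{*\intercal}$ by Fact~\ref{GF:2}, the coincidence of the two outer functions forces $F=\iota_{\gra A}$; thus $\mathcal{F}_A=\{\iota_{\gra A}\}$, which also establishes the final displayed identity.

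For ``$\Leftarrow$'', assume $\dom A=\dom A^*$ and that $\mathcal{F}_A$ is a singleton. By Fact~\ref{GF:2} the singleton condition is equivalent to $F_A=F_A^{*\intercal}$ (the minimum and the maximum over $\mathcal{F}_A$ agree). Since $\gra A$ is closed ($A$ being maximal monotone), Proposition~\ref{Faz:1} computes $F_A^*=\iota_{\gra A^{-1}}+\scal{\cdot}{\cdot}$; transposing and noting that $\iota_{\gra A^{-1}}(x^*,x)=\iota_{\gra A}(x,x^*)$ yields $F_A^{*\intercal}=\iota_{\gra A}+\scal{\cdot}{\cdot}$. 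Consequently $F_A=\iota_{\gra A}+\scal{\cdot}{\cdot}$, and in particular $\dom F_A=\gra A$.

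The crucial step is then to extract a graph inclusion. By Proposition~\ref{explicit}\ref{explicitii}, $F_A$ vanishes on $\gra(-A^*)$; as $0<\infty$, every point of $\gra(-A^*)$ lies in $\dom F_A=\gra A$, so $\gra(-A^*)\subseteq\gra A$. Upgrading this single inclusion to an equality is the step I expect to be the main obstacle, and it is precisely where the hypothesis $\dom A=\dom A^*$ is needed, since the singleton condition alone delivers only one inclusion. I would argue by cosets. First, $\gra(-A^*)\subseteq\gra A$ together with $\dom A=\dom A^*$ gives $\dom(-A^*)=\dom A^*=\dom A$, so the two relations share a common domain. Second, $(-A^*)0=-(A^*0)=A^*0=A0$, using Theorem~\ref{sia:3}\ref{sia:3iii} and the fact that $A0$ is a subspace (Fact~\ref{Rea:1}\ref{Th:26}). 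Now fix $x\in\dom A$: by Fact~\ref{Rea:1}\ref{Th:28}, both $Ax$ and $(-A^*)x$ are cosets of the common subspace $A0$. Since $(-A^*)x\subseteq Ax$ and $(-A^*)x\ne\varnothing$, choosing any $y^*\in(-A^*)x$ gives $Ax=y^*+A0=(-A^*)x$. Hence $\gra A=\gra(-A^*)$, that is, $A=-A^*$, so $A$ is skew, and the description $\mathcal{F}_A=\{\iota_{\gra A}\}$ follows from the forward implication already proved.
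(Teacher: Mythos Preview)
Your proof is correct. The forward implication matches the paper's argument (which simply cites Example~\ref{GF:3} and Fact~\ref{GF:2}), but for ``$\Leftarrow$'' you take a genuinely different and more elementary route. After obtaining $\gra(-A^*)\subseteq\gra A$ from $F_A=\iota_{\gra A}+\scal{\cdot}{\cdot}$ and Proposition~\ref{explicit}\ref{explicitii}, the paper proceeds to show that $-A^*$ is itself maximal monotone, by verifying the halo criterion of Theorem~\ref{Siam:1}: it takes $x\in\halo(-A^*)$, finds a monotonically related pair, deduces $x\in\dom A^{**}=\dom A$, and then uses $\dom A=\dom A^*$ to conclude $\halo(-A^*)=\dom(-A^*)$; maximality of both $A$ and $-A^*$ then forces equality of the graphs. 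Your coset argument bypasses all of this: once $\gra(-A^*)\subseteq\gra A$ is in hand, you observe via Theorem~\ref{sia:3}\ref{sia:3iii} that $(-A^*)0=A^*0=A0$, so for each $x$ in the common domain the nonempty set $(-A^*)x$ and the set $Ax$ are cosets of the \emph{same} subspace with one contained in the other, hence equal. This is shorter and avoids the halo machinery entirely; the paper's approach, on the other hand, yields as a by-product that $-A^*$ is maximal monotone, which may be of independent interest but is not needed for the theorem itself.
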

\begin{proof}
``$\Rightarrow$'': Combine Example~\ref{GF:3} with Fact~\ref{GF:2}.

``$\Leftarrow$'':
Fact~\ref{GF:2} and Proposition~\ref{Faz:1} yield
\begin{equation}\label{lower=upper}
F_A=F_A^{*\intercal}=\iota_{\gra A}+\langle \cdot,\cdot\rangle.
\end{equation}
By Proposition~\ref{explicit}\ref{explicitii},
for every $(y,y^*)\in \gra(-A^*)$, we have $F_{A}(y,y^*)=0$;
hence, in view of \eqref{lower=upper},
$ (y,y^*)\in \gra A$ and $\langle y,y^*\rangle=0$.
Thus
\begin{equation}\label{inclusion}
\gra -A^*\subset \gra A\quad \text{ and }\quad
\big(\forall (y,y^*)\in\gra A^*\big)\;\;
\langle y^*, y\rangle =0.
\end{equation}
Since $A$ is monotone (by hypothesis), so is $-A^*$.
We wish to show that $-A^*$ is maximal monotone.
To this end, take $x\in \halo(-A^*)$.
According to Proposition~\ref{rH:4},
there exist $x^*\in X^*$ such that $(x,x^*)$ is
monotonically related to $\gra(-A^*)$, i.e.,
$(\forall (y,y^*)\in\gra A^*)$
$\langle x-y,x^*+y^*\rangle\geq 0$; equivalently,
\begin{equation}\label{simple}
\big(\forall (y,y^*)\in\gra A^*\big) \quad
\langle x^*,x\rangle +\langle y^*,x\rangle -\langle x^*,y\rangle-\langle y^*,y\rangle\geq 0.
\end{equation}
Using \eqref{inclusion}, this in turn is equivalent to
$$\big(\forall (y,y^*)\in\gra A^*\big)\quad
\langle x^*,x\rangle\geq -\langle y^*,x\rangle +\langle x^*,y\rangle, $$
and --- since $\gra A^*$ is a linear subspace of $X\times X^*$ --- also to
$$\big(\forall (y,y^*)\in\gra A^*\big)\quad
0=-\langle y^*,x\rangle +\langle x^*,y\rangle=
\langle(x^*,-x),(y, y^*)\rangle.$$
Thus,
$(x,x^*)\in \gra A^{**}=\gra A$ (Fact~\ref{Rea:1}\ref{Sia:1}) and
in particular $x\in \dom A$. As $\dom A=\dom A^*$, we have
$x\in\dom A^*=\dom(-A^*)$. Therefore,
$ \halo(-A^*)\subset \dom(-A^*)$. The opposite inclusion is clear
from Proposition~\ref{linear}. Altogether,
 \begin{equation}\label{halodom}
\dom(-A^*)=\halo(-A^*).
 \end{equation}
By Fact~\ref{Sv:7}, $A^*$ is maximal monotone; hence,
Theorem~\ref{Siam:1} yields
$(\dom A^*)^{\bot}=A^*0$.
Since $\dom A^*=\dom(-A^*)$ and $A^*0=-A^*0$, we have
\begin{equation}\label{doma0}
\big(\dom(-A^*)\big)^{\bot}=-A^*0.
\end{equation}
Using \eqref{doma0}, \eqref{halodom}, and Theorem~\ref{Siam:1}, we conclude that $-A^*$ is maximal monotone.
Since $A$ is maximal monotone,  the inclusion in \eqref{inclusion}
implies that $A=-A^*$. Therefore, $A$ is skew.
\end{proof}

\section*{Acknowledgment}

Heinz Bauschke was partially supported by the Natural Sciences and
Engineering Research Council of Canada and
by the Canada Research Chair Program.
Xianfu Wang was partially supported by the Natural
Sciences and Engineering Research Council of Canada.

\newpage

\small

\end{document}